\newcommand{\dbtilde}[1]{\accentset{\approx}{#1}}
\newcommand{\R}{\mathbb{R}}
\newcommand{\inv}{^{-1}}
\newcommand{\ol}{\overline}
\newcommand{\sm}{\setminus}
\renewcommand{\div}{\text{div}\,}
\newtheorem{thm}{Theorem}
\newtheorem{lemma}[thm]{Lemma}
\newtheorem{cor}[thm]{Corollary}
\theoremstyle{definition}
\newtheorem{definition}[thm]{Definition}
\theoremstyle{remark}
\newtheorem*{remark}{Remarks}
\DeclareMathOperator{\vol}{vol}
\DeclareMathOperator{\grad}{grad}
\DeclareMathOperator{\Div}{div}
\DeclareMathOperator{\capac}{cap}
\begin{document}
\title{Penrose-type inequalities with a Euclidean background}
\date{\today}
\author{Jeffrey L. Jauregui}
\address{Department of Mathematics\\
Union College\\
807 Union St.\\
Schenectady, NY 12309\\}
\email{jaureguj@union.edu}

\begin{abstract}
The Riemannian Penrose inequality (RPI) bounds from below the ADM mass of asymptotically flat manifolds of 
nonnegative scalar curvature in terms of the total area of all outermost compact minimal surfaces. The general form of the RPI is currently known for manifolds of dimension up to seven.  In the present 
work, we prove a Penrose-like inequality that is valid in all dimensions, for conformally flat manifolds. Our inequality treats the area contributions of
the minimal surfaces in a more favorable way than the RPI, at the expense of using the smaller Euclidean area (rather than the intrinsic area). We give an example
in which our estimate is sharper than the RPI when many minimal surfaces are present. We do not require the minimal surfaces to be outermost.

We also generalize the technique to allow for metrics conformal to a scalar-flat (not necessarily Euclidean) background, and prove a Penrose-type inequality without an assumption on the sign of scalar curvature. Finally, we derive a new lower bound for the ADM mass of a conformally flat, asymptotically 
flat manifold containing any number of zero area singularities.
\end{abstract}

\maketitle

\section{Introduction}
The positive mass theorem (PMT) is a beautiful result on the geometry of manifolds of nonnegative
scalar curvature.  It implies a scalar curvature rigidity statement 
for Euclidean space,  is a crucial ingredient in the solution of the Yamabe problem \cite{schoen_yamabe}, and has deep implications for general relativity.  
The PMT was proved decades ago by Schoen and Yau in dimensions three through seven 
\cites{schoen_yau, schoen_yau2, schoen_lecture_notes} and by Witten for spin manifolds \cite{witten} in dimensions $n \geq 3$. Recently Schoen and Yau have given a proof for all dimensions $n\geq 3$ without the spin assumption \cite{schoen_yau3}.  In section \ref{sec_prelim} we will recall the relevant definitions.
\begin{thm}[Positive mass theorem] 
\label{thm_pmt}
Let $(M,g)$ be a complete, asymptotically flat 
Riemannian $n$-manifold without boundary, with $n \geq 3$.  
Suppose $(M,g)$ has nonnegative scalar curvature and ADM mass $m$.  Then $m \geq 0$, and $m=0$ 
if and only if $(M,g)$ is isometric to $\R^n$ with the Euclidean metric.
\end{thm}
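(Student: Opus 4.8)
The plan is to treat the two stated cases by the two classical arguments — the Schoen--Yau minimal hypersurface method when $3 \le n \le 7$, and Witten's spinor method when $M$ is spin — and then to address the rigidity statement.

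\emph{Step 1: Reduction to a normal form.} Assume $m < 0$ for contradiction. A density argument lets us replace $(M,g)$ by a metric with mass of the same sign but with \emph{harmonic asymptotics}: after a small conformal change $g \mapsto u^{4/(n-2)}g$ with $u$ solving an appropriate linear elliptic equation, one arranges that outside a compact set
\[
g = u^{4/(n-2)}\,\delta, \qquad u = 1 + \frac{\alpha}{|x|^{n-2}} + O(|x|^{1-n}),
\]
that $R_g \ge 0$ (indeed strictly positive outside a compact set), and that the ADM mass is a positive multiple of $\alpha$, so $\alpha < 0$. This is the technical but routine part (weighted Schauder estimates and the transformation law of $R$ under conformal change).

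\emph{Step 2: Minimal hypersurface argument, $3 \le n \le 7$.} The condition $\alpha < 0$ makes the coordinate slabs near infinity serve as barriers, so one can produce a complete, properly embedded, area-minimizing hypersurface $\Sigma^{n-1}\subset M$ asymptotic to a Euclidean hyperplane. The hypothesis $n \le 7$ enters exactly here: a codimension-one area minimizer has singular set of codimension at least $8$, hence is smooth when $n \le 7$. One then uses that $\Sigma$ is \emph{stable}: for all $\phi \in C^\infty_c(\Sigma)$,
\[
\int_\Sigma |\nabla\phi|^2 \;\ge\; \int_\Sigma \bigl(|\mathrm{II}|^2 + \Ric_g(\nu,\nu)\bigr)\phi^2 .
\]
Combined with the traced Gauss equation $R_\Sigma = R_g - 2\Ric_g(\nu,\nu) - |\mathrm{II}|^2$ (using $\tr\mathrm{II} = 0$), this shows the operator $-\Delta_\Sigma + \tfrac12 R_\Sigma$ is nonnegative on $\Sigma$; hence $\Sigma$ is conformal to a scalar-flat, asymptotically flat $(n-1)$-manifold. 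For $n \ge 4$, the presence of such a manifold with non-positive mass, together with the positive mass theorem in dimension $n-1$ (induction) and its rigidity case, is incompatible with $\alpha < 0$. The base case $n=3$ is handled directly: $\Sigma$ is a complete stable minimal surface conformal to $\R^2$, and a logarithmic cutoff in the stability inequality together with the Cohn--Vossen bound contradicts the asymptotically planar geometry.

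\emph{Step 3: Spinor argument, $M$ spin.} Here one works directly, for all $n \ge 3$. Fix a constant spinor $\psi_0$ at infinity and solve $D\psi = 0$ with $\psi - \psi_0 \to 0$; solvability follows from weighted elliptic estimates and the Lichnerowicz--Weitzenböck identity $D^2 = \nabla^*\nabla + \tfrac14 R_g$, which together with $R_g \ge 0$ precludes $L^2$ kernel. Integrating this identity over $M$ with a large coordinate sphere $S_\rho$ removed, the interior term is $\int (|\nabla\psi|^2 + \tfrac14 R_g|\psi|^2) \ge 0$, while expanding the metric and connection coefficients on $S_\rho$ shows the boundary term converges as $\rho \to \infty$ to a positive dimensional constant times $m\,|\psi_0|^2$. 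Hence $m \ge 0$, contradicting $m<0$.

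\emph{Step 4: Rigidity, and the main obstacle.} If $m=0$, then in the spin case the boundary term above vanishes, forcing $\nabla\psi \equiv 0$ and $R_g \equiv 0$; letting $\psi_0$ range over a basis of constant spinors at infinity yields enough parallel spinors to make the spinor bundle flat, which forces the full curvature tensor to vanish, so $(M,g)$ is flat and, being complete and asymptotically flat, is isometric to $\R^n$. In the non-spin case $3 \le n \le 7$, the equality case of Step 2 — or a further perturbation argument that strictly decreases the mass unless $g$ is already flat — likewise yields flatness and hence $\R^n$. \textbf{The main obstacle} is the regularity of area-minimizing hypersurfaces, which is precisely what confines the non-spin half of the theorem to $n \le 7$; the remaining ingredients are weighted elliptic PDE and the Weitzenböck and second-variation computations, delicate to set up but conceptually standard.
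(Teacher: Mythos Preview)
The paper does not prove Theorem \ref{thm_pmt}. It is stated in the introduction as background and attributed to Schoen--Yau \cites{schoen_yau, schoen_lecture_notes} (for $3 \le n \le 7$) and Witten \cite{witten} (for spin $M$); no argument is given, and the subsequent results of the paper merely invoke it (indirectly, through Shi--Tam's corner version). So there is nothing to compare your proposal against within the paper.

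For what it is worth, your sketch does track the two cited proofs in broad outline. A few places are thin even as a sketch: in Step~2, the assertion that the stable minimal hypersurface $\Sigma$ is itself asymptotically flat with \emph{negative} mass after conformal change (needed to close the induction) requires a real argument about the asymptotics of the first eigenfunction of the stability operator, not just nonnegativity of $-\Delta_\Sigma + \tfrac{1}{2}R_\Sigma$; and in Step~4 the non-spin rigidity is genuinely subtle (Schoen's perturbation argument is the standard route, and it is more than a one-line remark). But since the paper neither proves nor claims to prove the PMT, none of this bears on the comparison you were asked to make.
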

The well-known physical interpretation of the PMT is that in an $(n+1)$-dimensional Lorentzian spacetime obeying the dominant
energy condition, any totally geodesic ``spacelike slice'' (Riemannian submanifold of dimension $n$) has 
nonnegative total mass (see \cites{schoen_yau, bray_RPI} for instance).

A generalization of the PMT is the Riemannian Penrose inequality (RPI), proved as stated below by 
Bray \cite{bray_RPI} (for dimension $n=3$) and later by Bray and Lee \cite{bray_lee} (for $3 \leq n\leq 7$).  
Huisken and Ilmanen gave a proof for $n=3$, with $|\Sigma|_g$ replaced by the area of the largest connected 
component of $\partial M$ \cite{imcf}.  The Bray and Bray--Lee proofs rely on the PMT, while the 
Huisken--Ilmanen approach provides an independent proof of the PMT for $n=3$.
\begin{thm}[Riemannian Penrose inequality]
\label{thm_rpi}
Let $(M^n,g)$ be an asymptotically flat manifold of dimension $3 \leq n \leq 7$ with nonnegative scalar curvature.  
Suppose the boundary $\Sigma = \partial M$ has area $|\Sigma|_g$, zero mean curvature, and is
area-outer-minimizing (see below). Then 
\begin{equation}
\label{ineq_rp}
m_{ADM}(g) \geq \frac{1}{2}\left(\frac{|\Sigma|_g}{\omega_{n-1}}\right)^{\frac{n-2}{n-1}}.
\end{equation}
\end{thm}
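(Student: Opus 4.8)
The plan is to prove the inequality by a monotonicity argument along a geometric flow, using Theorem~\ref{thm_pmt} as an ingredient. For $n = 3$ the most transparent route is the inverse mean curvature flow (IMCF) of Huisken and Ilmanen; the statement exactly as written — with $|\Sigma|_g$ the full boundary area, and for all $3 \leq n \leq 7$ — is obtained instead by the conformal flow of Bray and Bray--Lee. I will sketch both. For $n=3$ I would first recall the Hawking mass of a closed surface,
\[
m_H(\Sigma) = \sqrt{\frac{|\Sigma|_g}{16\pi}}\left(1 - \frac{1}{16\pi}\int_{\Sigma} H^2\, dA\right),
\]
and note the two facts that bracket the argument: since $\Sigma = \partial M$ is minimal, $m_H(\Sigma)$ equals precisely the right-hand side of \eqref{ineq_rp} when $n = 3$ (recall $\omega_2 = 4\pi$); and along an exhaustion of $M$ by large surfaces the Hawking mass is asymptotically bounded above by $m_{ADM}(g)$. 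The theorem for $n=3$ then follows once one interpolates between $\Sigma$ and infinity by a family of surfaces along which $m_H$ is nondecreasing.

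The heart of the matter is Geroch monotonicity. One evolves $\Sigma$ by inverse mean curvature flow, so that $\Sigma_t$ moves in the outward unit normal direction with speed $1/H$; then $|\Sigma_t|_g$ grows like $e^{t}$, and differentiating $m_H(\Sigma_t)$ in $t$ and integrating by parts bounds the derivative below by an integral of nonnegative quantities — the squared norm of the trace-free second fundamental form, $|\nabla_\Sigma H|^2/H^2$, and the ambient scalar curvature — together with a multiple of $\chi(\Sigma_t)$ coming from the Gauss--Bonnet theorem. Since outward-minimizing surfaces in this setting can be taken to be connected $2$-spheres, the topological term combines favorably with the rest and $m_H(\Sigma_t)$ is nondecreasing.

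The principal obstacle is that smooth IMCF need not exist for all time: the mean curvature can vanish and the surfaces can become singular or fail to stay embedded. Huisken and Ilmanen's resolution, which I would adopt, replaces the classical flow by a weak level-set formulation in which $\Sigma_t = \{u = t\}$ for a proper Lipschitz function $u$ solving a degenerate elliptic variational problem, and where the surface is allowed to jump outward to the boundary of its minimizing hull whenever strict outer-minimization fails; one must then check that $m_H$ does not decrease across such a jump — this is exactly where the area-outer-minimizing hypothesis on $\Sigma$ is used — and establish existence, compactness, partial regularity, and the asymptotics near infinity of these weak solutions. Carried out for a single boundary component, this yields the $n=3$ inequality with $|\Sigma|_g$ replaced by the area of the largest component of $\partial M$.

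To get the theorem exactly as stated, and for $3 \leq n \leq 7$, I would argue instead by Bray's conformal flow. One constructs a one-parameter family of metrics $g_t = u_t^{4/(n-2)} g$, with $u_t > 0$ harmonic with respect to $g$ outside the evolving horizon $\Sigma_t$ and subject to boundary conditions chosen so that $\Sigma_t$ stays minimal and its area is constant in $t$. The crucial statement is that $m_{ADM}(g_t)$ is nonincreasing, and it is in proving this that Theorem~\ref{thm_pmt} enters — applied, after a reflection through the horizon, to an auxiliary complete manifold of nonnegative scalar curvature built from the conformal factor. As $t \to \infty$ the metrics converge, in an appropriate sense, to a spatial Schwarzschild metric whose horizon has the same area as $\Sigma$ and for which \eqref{ineq_rp} is an equality; since $g_0 = g$, chaining these facts gives
\[
m_{ADM}(g) = m_{ADM}(g_0) \geq \lim_{t\to\infty} m_{ADM}(g_t) = \frac{1}{2}\left(\frac{|\Sigma|_g}{\omega_{n-1}}\right)^{\frac{n-2}{n-1}},
\]
as claimed. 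The restriction $n \leq 7$ enters through the regularity theory for area-minimizing hypersurfaces, needed both to keep the horizons under control along the flow and to apply the positive mass theorem.
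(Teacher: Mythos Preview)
The paper does not prove Theorem~\ref{thm_rpi}; it is quoted as a known result, with attribution to Bray \cite{bray_RPI}, Bray--Lee \cite{bray_lee}, and Huisken--Ilmanen \cite{imcf}, and is used only as background and motivation. Your sketch accurately summarizes the two approaches in those references --- Geroch monotonicity under weak inverse mean curvature flow for $n=3$ with a connected horizon, and the Bray/Bray--Lee conformal flow reducing to the positive mass theorem for the full statement in dimensions $3 \leq n \leq 7$ --- so there is nothing to compare: your outline is precisely what the cited literature does, and the paper itself offers no independent argument.
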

We say $\Sigma$ is \emph{area-outer-minimizing} in $M$ if every surface enclosing $\Sigma$ has area at least
as large as $\Sigma$.  The typical physical interpretation of the RPI is as follows (see \cite{bray_RPI} for more details):
view $(M,g)$ as a totally geodesic spacelike slice of a spacetime obeying the dominant energy condition.  
Each component of the minimal boundary $\partial M$ is the apparent horizon of a black hole.  The number 
$\frac{1}{2}\left(\frac{|\Sigma|_g}{\omega_{n-1}}\right)^{\frac{n-2}{n-1}}$ represents the total mass 
of the collection of black holes.  Thus, the RPI states that the total mass of $(M,g)$ is at least the mass contributed by the black holes.

\medskip 
\motiv
In this paper we are largely motivated by a question of Bray and Iga \cite{bray_iga}, regarding whether an elementary proof of the RPI is possible for
conformally flat metrics. In fact, we go a step further and ask if an even sharper inequality is possible, exploiting
the conformally flat structure of $g$. We meet both success and failure: we establish a Penrose-like inequality (Theorem \ref{thm_main}), valid in all dimensions, bounding
the ADM mass from below in terms of the Euclidean area, rather than the $g$-area, of $\Sigma$. Although using the Euclidean area is undesirable (because it is smaller), we
succeed in producing an inequality in which the lower bound accounts for the areas of individual minimal boundary components in a more favorable way. In fact, we will give an example with five or more black holes in which our inequality gives a better estimate for the ADM mass than the RPI. 

\begin{thm}
\label{thm_main}
Let $n\geq 3$, and suppose $M = \R^n \sm \Omega$ for a smooth, mean-convex, bounded open set $\Omega$, 
of which every connected component is star-shaped.  Let $k$ be the number of components of $\Omega$. Suppose $g$ is a Riemannian metric on $M$
of the form $u^{\frac{4}{n-2}} \delta$, where $\delta$ is the Euclidean metric and $u$ is a smooth positive
function approaching one at infinity. Assume that $g$ is asymptotically
flat, has nonnegative scalar curvature $R_g$, and $\partial M = \partial \Omega$ is a minimal surface in $(M,g)$.  Then
\begin{equation}
\label{ineq_main}
m_{ADM}(g) > \left(\frac{A_1}{\omega_{n-1}}\right)^{\frac{n-2}{n-1}} + \ldots
	+ \left(\frac{A_k}{\omega_{n-1}}\right)^{\frac{n-2}{n-1}} + \frac{1}{2(n-1)\omega_{n-1}}
\int_M R_g u\inv dV_g,
\end{equation}
where $A_i$ is the Euclidean area of the boundary of the $i$th connected component of $\Omega$, $\omega_{n-1}$ 
is the Euclidean area of the unit $(n-1)$-sphere in $\R^n$, and $dV_g$ is the volume form of $(M,g)$.
\end{thm}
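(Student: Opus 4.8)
The plan is to exploit the conformal structure. Since $g = u^{4/(n-2)}\delta$ with $u>0$ harmonic-like decay, the scalar curvature equation gives
\begin{equation*}
-\frac{4(n-1)}{n-2}\,\Delta u = R_g\, u^{\frac{n+2}{n-2}},
\end{equation*}
where $\Delta$ is the flat Laplacian. Thus $\Delta u \leq 0$, i.e. $u$ is superharmonic on $M = \R^n\sm\Omega$, and the minimality of $\partial\Omega$ in $(M,g)$ translates (via the standard conformal transformation of mean curvature) into a Neumann-type boundary condition for $u$ on $\partial\Omega$: the $\delta$-mean curvature $H_\delta$ of $\partial\Omega$ and the outward normal derivative satisfy $\frac{2(n-1)}{n-2}\partial_\nu u = -H_\delta\, u$ (signs to be pinned down), where $\partial\Omega$ is taken with the normal pointing \emph{into} $\Omega$. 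Since $\Omega$ is mean-convex, $H_\delta \geq 0$, so $\partial_\nu u \leq 0$ with $\nu$ pointing out of $M$. Meanwhile the ADM mass of $g$ can be written, using $u\to 1$ at infinity and the expansion $u = 1 + \frac{A}{|x|^{n-2}} + o(|x|^{-(n-2)})$, as $m_{ADM}(g) = \frac{2}{n-2}\,(\text{coefficient of } |x|^{-(n-2)} \text{ in } u)$ plus a correction; more precisely, integrating $\Delta u$ over $M$ and using the divergence theorem on a large sphere $S_r$ and on $\partial\Omega$,
\begin{equation*}
\int_M (-\Delta u)\, dx = \lim_{r\to\infty}\int_{S_r} \partial_\nu u\, dA \;-\; \int_{\partial\Omega}\partial_\nu u\, dA,
\end{equation*}
and the flux at infinity is $-(n-2)\omega_{n-1}\,m_{ADM}(g)$ up to the dimensional constant. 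Rewriting $\int_M(-\Delta u)\,dx$ in terms of $\int_M R_g u^{\frac{n+2}{n-2}}dx = \int_M R_g u\inv\, dV_g$ (since $dV_g = u^{\frac{2n}{n-2}}dx$) produces the scalar curvature term on the right of \eqref{ineq_main}. So it remains to bound the boundary flux $-\int_{\partial\Omega}\partial_\nu u\, dA = \int_{\partial\Omega} \frac{n-2}{2(n-1)} H_\delta\, u\, dA$ from below by $2(n-1)\omega_{n-1}\sum_i (A_i/\omega_{n-1})^{\frac{n-2}{n-1}}$, after the normalization constants are accounted for.

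The heart of the matter is therefore a pointwise/integral estimate on each star-shaped component. On a single star-shaped, mean-convex component $\Omega_i$ with boundary area $A_i$, I want $\int_{\partial\Omega_i} H_\delta\, u\, dA \geq c_n A_i^{\frac{n-2}{n-1}}$ for the appropriate constant, using only that $u$ is superharmonic on $\R^n\sm\Omega$, $u\to 1$ at infinity, and $u>0$. The strategy: since $u$ is superharmonic and positive with $u\to 1$, the minimum principle on exterior domains gives $u \geq 1$ everywhere... no — superharmonic means $u$ lies \emph{above} its harmonic replacement, and on an exterior domain with $u\to 1$ we actually get $u \geq 1$. Then compare $u$ on $\partial\Omega_i$ with the capacitary potential of $\Omega_i$: let $\phi_i$ solve $\Delta\phi_i = 0$ on $\R^n\sm\Omega_i$, $\phi_i = 1$ on $\partial\Omega_i$, $\phi_i\to 0$. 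Because $u \geq 1 \geq$ its own harmonic-on-$(\R^n\sm\Omega_i)$ extension with boundary value $u|_{\partial\Omega_i}$... I instead use that $1-\phi_i$ is harmonic, $\leq u$ on the boundary of the region, hence $u \geq 1-\phi_i$, which does not immediately help. The cleaner route is: $u$ superharmonic with $u|_{\partial\Omega_i}\geq 1$ and $u\to 1$ forces, via comparison with $1$, nothing new; the genuine input must be the star-shapedness. Here I expect to use the Reilly/Heintze--Karcher-type inequality or a direct calibration: for a star-shaped mean-convex domain, $\int_{\partial\Omega_i} H_\delta\, dA \geq (n-1)\omega_{n-1}^{1/(n-1)} |\partial\Omega_i|^{\frac{n-2}{n-1}}$ by the Minkowski inequality (Alexandrov--Fenchel / Guan--Li), and then combine with $u\geq 1$ on $\partial\Omega_i$. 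Summing over components $i=1,\dots,k$ and matching constants completes the proof.

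The main obstacle is precisely this last geometric inequality on $\partial\Omega_i$ together with the correct bookkeeping of constants: one needs the sharp Minkowski-type inequality $\int_{\partial\Omega} H\, dA \geq (n-1)\omega_{n-1}^{\frac{1}{n-1}}|\partial\Omega|^{\frac{n-2}{n-1}}$ for star-shaped mean-convex hypersurfaces in $\R^n$, and one must verify it degrades gracefully enough (with equality on round spheres, matching the Schwarzschild comparison) so that the $k$ separate star-shaped components each contribute their full term rather than only a term depending on the total area. A secondary subtlety is justifying $u \geq 1$ on $\partial\Omega$: this uses that $u$ is superharmonic on the \emph{exterior} region $M$ and tends to $1$ at infinity, so by the minimum principle for superharmonic functions on unbounded domains (with the boundary-at-infinity value $1$), $u$ cannot dip below $1$ in the interior of $M$; but the relevant values are on $\partial M = \partial\Omega$, and superharmonicity gives $\min$ on the boundary, so in fact the infimum could be attained on $\partial\Omega$ — one must argue via the exterior maximum principle applied to $1-u$ (subharmonic, $\to 0$ at infinity, so $\leq \max$ on $\partial\Omega$), which is the wrong direction; the fix is to instead not require $u\geq 1$ pointwise but only to use $\int_{\partial\Omega} H_\delta u\,dA \geq \int_{\partial\Omega}H_\delta\,dA$, which needs $u\geq 1$ on $\partial\Omega$ after all, and this I would extract by a Hopf-lemma/flux argument using $\Delta u \leq 0$ and $\partial_\nu u \leq 0$ on $\partial\Omega$ — the detailed reconciliation of these signs is where I expect to spend the most care.
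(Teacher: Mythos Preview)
Your proposal is correct and follows essentially the same route as the paper: integrate $\Delta u$ by parts to express $m_{ADM}(g)$ as the boundary flux plus the weighted scalar curvature integral, convert the boundary flux to $\frac{1}{(n-1)\omega_{n-1}}\int_{\partial\Omega} H_\delta u\,dA$ via the conformal mean curvature law, use $u\geq 1$ to drop the $u$, and then apply the Guan--Li Minkowski inequality on each star-shaped mean-convex component. Your lingering worry about $u\geq 1$ is resolved exactly as you guessed at the end: $u$ is superharmonic, tends to $1$ at infinity, and has $\partial_\nu u \leq 0$ on $\partial M$ (with $\nu$ pointing into $M$), so by the Hopf lemma the infimum cannot occur on $\partial M$, hence $u\geq 1$ throughout --- the paper states this in one line.
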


In addition to the work of Bray and Iga \cite{bray_iga}, who showed  a version of the RPI for dimension three, with suboptimal constant, using only properties of superharmonic functions, we also mention other work on Penrose-like inequalities in special cases. Schwartz and Freire--Schwartz proved ``volumetric''
Penrose inequalities \cites{schwartz, FS} for conformally flat manifolds.  Lam initiated the study of the PMT and RPI for graphs in $\R^{n+1}$ \cites{lam, lam_thesis}, with subsequent work by de Lima--Gir\~ao, \cite{lima_girao}, Huang--Wu \cite{huang_wu}, and others.

We also prove a result similar to Theorem \ref{thm_main}, with \emph{zero area singularities} (ZAS) replacing minimal surfaces. As minimal surfaces model black holes of positive mass, ZAS can be viewed as black holes of negative mass. Bray introduced ZAS \cite{bray_npms}, and further work was done by Robbins \cite{zas_robbins}, the author \cite{thesis}, and Bray and the author \cite{zas}. We refer the reader to Bray's survey paper \cite{bray_survey}. An inequality analogous to the Riemannian Penrose inequality is conjectured for ZAS in an asymptotically flat manifold $(M,g)$ of nonnegative scalar curvature:
\begin{equation}
\label{eqn_z}
m_{ADM}(g) \geq m_{ZAS}(\Sigma),
\end{equation}
where the right-hand side is the ``ZAS mass'' of $\Sigma =\partial M$. (All relevant definitions are recalled in Section \ref{sec_zas}.) For conformally flat manifolds, we are able to prove in Theorem \ref{thm_zas}:
$$m_{ADM}(g) \geq m_{ZAS}(\Sigma) \left(1+\frac{1}{4}\iota^2\right),$$
for a real constant $\iota \geq 1$ depending on the geometry of the underlying Euclidean manifold. This inequality is weaker than (\ref{eqn_z}) because the ZAS mass is negative.

\smallskip
\outline
The paper is organized as follows.  In the next section, we recall some definitions, including asymptotic flatness
and ADM mass.  Section \ref{sec_bh} states and proves some Penrose-like inequalities for metrics
that are conformal to a special background, and constructs a class of examples in which these inequalities are sharper than the RPI.  In Theorem \ref{thm_main}, proved in this section, the background manifold
is Euclidean space minus a union of domains. Theorem \ref{thm_general} is a generalization, allowing for the background to be
any asymptotically flat, zero scalar curvature manifold that agrees to first order with the Euclidean metric on the boundary.  Theorem \ref{thm_general2} is a different generalization that allows for
possibly negative scalar curvature. In section \ref{sec_discuss}, we discuss the implications of these Penrose-like inequalities not requiring the  minimal boundary to be area-outer-minimizing, in contrast with the RPI.

In section \ref{sec_zas}, we recall the details of ZAS.  Theorem \ref{thm_zas} is a type of Penrose inequality for ZAS contained in 
conformally flat manifolds.  Such an inequality is unknown in general (without conformal flatness), even in 
low dimensions. In section \ref{sec_mixed}, we discuss an inequality conjectured by Bray for asymptotically flat manifolds that contain both black holes and zero area singularities.  We apply the techniques of the previous sections to prove a  weaker version of the inequality in the conformally flat case, under an additional technical assumption.

Finally, in an appendix we give a proof of the inequality of Poincar\'e, Faber, and Szeg\"o relating the volume and capacity of regions in $\R^3$ \cite{isoperimetric}; this is needed in the proof of Theorem \ref{thm_zas}.

\ack The author would like to thank Fernando Schwartz and Hubert Bray for helpful 
discussions and suggestions.  He also would like to note that Schwartz's  paper \cite{schwartz} 
was the original inspiration for this work. Much of the work for this paper was carried out while the author was affiliated with the University of Pennsylvania. The author would also like to thank the referee for a number of insightful comments and suggestions that improved the paper.

\section{Preliminaries}
\label{sec_prelim}
\begin{definition}
\label{def_AF}
A smooth, connected, Riemannian manifold $(M,g)$ (possibly with compact boundary) 
of dimension $n\geq 3$ is \textbf{asymptotically flat} (with one end) if 
\begin{enumerate}[(i)]
\item there exists a compact set $K \subset M$ and a diffeomorphism $\Phi: M \sm K \to \R^n \sm \ol{B}$ (where $\ol B$ is a closed ball), and
\item in the ``asymptotically flat coordinates'' $(x^1, \ldots, x^n)$ on $M \sm K$ induced by $\Phi$, the metric $g$ satisfies:
\begin{align*}
 |g_{ij} - \delta_{ij}| &\leq \frac{c}{|x|^p},&
 |\partial_k g_{ij}| &\leq \frac{c}{|x|^{p+1}},\\
 |\partial_k \partial_l g_{ij}| &\leq \frac{c}{|x|^{p+2}},&
 |R_g| &\leq \frac{c}{|x|^q},
\end{align*}
for $i,j,k,l=1,\ldots,n$, where $|x|=\sqrt{(x^1)^2 + \ldots + (x^n)^2}$, and  $c>0$, $p > \frac{n-2}{2}$, and $q > n$ are constants,
$\delta_{ij}$ is the Kronecker delta, and $R_g$ is the scalar curvature of $g$.
\end{enumerate}
\end{definition}

Next, we recall the definition of the ADM mass \cite{adm}, a number associated
to any asymptotically flat manifold, which in a sense provides a measure of the rate at which the metric decays at infinity.  Bartnik \cite{bartnik} and Chru\'sciel \cite{Chr} proved that the ADM mass is a geometric invariant.
\begin{definition}
The \textbf{ADM mass} of an asymptotically flat manifold $(M,g)$ of dimension $n$ is the number 
$$m_{ADM}(g) = \frac{1}{2(n-1)\omega_{n-1}} \lim_{r\to \infty} \sum_{i,j=1}^n\int_{S_r} \left(\partial_i g_{ij} - \partial _j g_{ii}\right)\frac{x^j}{r} dA$$
where $(x^i)$ are asymptotically flat coordinates, $S_r$ is the coordinate sphere $\{|x|=r\}$, $dA$ is the area form on $S_r$ induced by $\delta_{ij}$, and $\omega_{n-1}$ is the
area of the unit sphere in $\R^n$.
\end{definition}

We recall some other terminology here as well. Let $\Omega \subset \R^n$ be a bounded open set that is smooth, i.e. $\partial \Omega$ is a smooth hypersurface. Recall that $\Omega$ is \emph{mean-convex} if $\partial \Omega$ has nonnegative mean curvature in
the direction pointing into $\R^n \setminus \Omega$. In this paper, any reference to mean-convexity is with respect to the Euclidean metric.
Recall that $\Omega$ is \emph{star-shaped} if there exists a point $x_0 \in \Omega$ such that for each
$x \in \Omega$, the line segment from $x$ to $x_0$ is contained in $\Omega$.

\section{Inequalities for black holes}
\label{sec_bh}

\subsection{The conformally flat case}
Here we prove Theorem \ref{thm_main} from the introduction,  a 
Penrose-like inequality in all dimensions for conformally flat, asymptotically flat metrics.

Note that the class of metrics considered in Theorem \ref{thm_main} includes, for instance, the 
Schwarzschild metric $g_m$ of mass $m>0$: choose $\Omega$ to be the open ball of radius
$\left(\frac{m}{2}\right)^{\frac{1}{n-2}}$ about the origin, and define on 
$M = \R^n \sm \Omega$:
\begin{equation}
g_m = \left(1+\frac{m}{2|x|^{n-2}}\right)^{\frac{4}{n-2}} \delta. \label{eqn_schwarzschild}
\end{equation}

Before presenting the proof of Theorem \ref{thm_main}, we state some consequences:
\begin{cor}
\label{cor_main}
Under the hypotheses of Theorem \ref{thm_main}, we have the following Penrose-like inequalities
\begin{align}
m_{ADM}(g) > \left(\frac{A}{\omega_{n-1}}\right)^{\frac{n-2}{n-1}} \label{ineq_area}\\
m_{ADM}(g) > \left(\frac{V}{\beta_n}\right)^{\frac{n-2}{n}} \label{ineq_vol},
\end{align}
where $A$ is the Euclidean area of $\partial \Omega$, $V$ is the Euclidean volume of $\Omega$, and $\beta_n$
is the Euclidean volume of the unit ball in $\R^n$.
\end{cor}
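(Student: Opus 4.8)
The plan is to obtain both inequalities as immediate consequences of Theorem \ref{thm_main}, by discarding the nonnegative bulk term and then invoking two elementary facts: the subadditivity of $t \mapsto t^s$ for $0 < s < 1$, and the Euclidean isoperimetric inequality applied to each component of $\Omega$ separately.

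For \eqref{ineq_area}: since $R_g \ge 0$ and $u > 0$ on $M$, the term $\frac{1}{2(n-1)\omega_{n-1}}\int_M R_g u^{-1}\,dV_g$ appearing in \eqref{ineq_main} is nonnegative, so $m_{ADM}(g) \ge \sum_{i=1}^k (A_i/\omega_{n-1})^{\frac{n-2}{n-1}}$. For $n\ge 3$ the exponent $s := \frac{n-2}{n-1}$ lies in $(0,1)$, and for such $s$ the function $t\mapsto t^s$ is concave on $[0,\infty)$ with $0^s=0$, hence subadditive: $a^s+b^s \ge (a+b)^s$ for $a,b\ge 0$. Iterating over the $k$ components gives $\sum_i A_i^{\,s} \ge \big(\sum_i A_i\big)^s = A^s$, and dividing through by $\omega_{n-1}^{\,s}$ yields \eqref{ineq_area}.

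For \eqref{ineq_vol}: because each component $\Omega_i$ is star-shaped with smooth boundary, $\partial\Omega_i$ is connected, so it is precisely the $i$th component of $\partial\Omega$, of Euclidean area $A_i$; let $V_i$ denote the Euclidean volume of $\Omega_i$, so that $\sum_i V_i = V$. The classical isoperimetric inequality in $\R^n$ gives $A_i \ge \omega_{n-1}\,(V_i/\beta_n)^{\frac{n-1}{n}}$ (equality for round balls, using $\omega_{n-1}=n\beta_n$). Raising to the positive power $\frac{n-2}{n-1}$ yields $(A_i/\omega_{n-1})^{\frac{n-2}{n-1}} \ge (V_i/\beta_n)^{\frac{n-2}{n}}$. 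Summing over $i$, chaining with the estimate from the previous paragraph, and applying subadditivity of $t\mapsto t^{\frac{n-2}{n}}$ (the exponent again lies in $(0,1)$ for $n\ge3$) together with $\sum_i V_i = V$ gives \eqref{ineq_vol}.

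There is no real obstacle here; the only points deserving a word of care are that the relevant exponents genuinely lie in $(0,1)$ for $n\ge 3$ — so the power functions are concave and subadditive — and that the isoperimetric inequality is legitimately applied component by component before the sum is collapsed. Neither is problematic.
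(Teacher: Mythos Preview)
Your argument is correct and follows essentially the same route as the paper: drop the nonnegative scalar-curvature term from Theorem \ref{thm_main}, use subadditivity of $t\mapsto t^{(n-2)/(n-1)}$ to pass from $\sum_i A_i^{(n-2)/(n-1)}$ to $A^{(n-2)/(n-1)}$, and then invoke the Euclidean isoperimetric inequality to reach the volume bound. The only cosmetic difference is that the paper applies the isoperimetric inequality once to all of $\Omega$ after establishing \eqref{ineq_area}, whereas you apply it component-by-component before collapsing the sum; both orderings are valid and yield the same conclusion.
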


In the course of the proofs of Theorem \ref{thm_main} and Corollary \ref{cor_main}, we will see that (\ref{ineq_main}), (\ref{ineq_area}), and (\ref{ineq_vol}) are 
not sharp, off by a factor of two in the model case in which $\Omega$ is a round $n$-ball and $g$ is a 
Schwarzschild metric. In all cases, the Euclidean area of $\partial \Omega$ will be strictly less than the $g$-area.

\begin{remark}
Inequality (\ref{ineq_main}) in Theorem \ref{thm_main} does not follow from the RPI in any dimension, even without the scalar curvature integral term. If $\Sigma$ happens to be area-outer-minimizing in $(M,g)$, then
inequalities (\ref{ineq_area}) and (\ref{ineq_vol}) follow from the 
RPI, but only in the dimensions $3 \leq n \leq 7$ for which the RPI is currently known. 
Inequality (\ref{ineq_vol}) is the volumetric Penrose inequality that was proved by Schwartz without the star-convexity
assumption \cite{schwartz}, later improved by Freire--Schwartz (cf. \cite{FS}).
\end{remark}

\begin{proof}[Proof of Theorem \ref{thm_main}]
The scalar curvature $R_g$ of the conformally flat metric $g = u^{\frac{4}{n-2}} \delta$ is given by
the formula
\begin{equation}
\label{eq_conf_scalar_curv}
R_g =u^{-\frac{n+2}{n-2}} \left(  -\frac{4(n-1)}{n-2}\Delta u + R_\delta u\right),
\end{equation}
where $\Delta = \Div \grad$ is the Euclidean Laplacian and $R_\delta=0$ is the scalar curvature of $\delta$.  Therefore the hypothesis of nonnegative scalar 
curvature translates to $\Delta u \leq 0$.  Apply the divergence theorem
over the region $B_r \supset \Omega$ in $M$ bounded by a large coordinate sphere $S_r$ of radius $r$:
$$\int_{B_r \setminus \Omega} \Delta u dV = \int_{S_r} \partial_{\nu}(u)dA - \int_{\partial M} \partial_{\nu}(u) dA,$$
where $dV$, $dA$ and $\nu$ are the volume form, area form, and unit normal with respect to the Euclidean metric.  
In both cases, $\nu$ is chosen to point toward infinity, and $\partial_{\nu}$ is the directional 
derivative with respect to $\nu$.  Since $R_g$ is integrable with respect to $g$
(from $q > n$ in the definition of asymptotic flatness), equation
(\ref{eq_conf_scalar_curv}) and the fact $u \to 1$ at infinity proves that $\Delta u$ is integrable
with respect to $\delta$.  Thus, 
\begin{equation}
\label{nu_u}
-\frac{2}{(n-2)\omega_{n-1}}\lim_{r \to \infty}\int_{S_r} \partial_{\nu}(u) dA 
= -\frac{2}{(n-2)\omega_{n-1}}\int_{\partial M} \partial_{\nu}(u)  dA
  -\frac{2}{(n-2)\omega_{n-1}}\int_{M} \Delta u  dV.
  \end{equation}
It is straightforward to check that the left-hand side is the ADM mass $m$ of $(M,g)$.  Moreover,
equation (\ref{eq_conf_scalar_curv}) and the fact $dV_g=u^{\frac{2n}{n-2}} dV$ reduce this to:
\begin{equation}
\label{eqn1}
m = -\frac{2}{(n-2)\omega_{n-1}}\int_{\partial M} \partial_{\nu}(u) dA + \frac{1}{2(n-1)\omega_{n-1}}
\int_M R_g u\inv dV_g.
\end{equation}
Using the conformal transformation law for the mean curvature and the 
hypothesis that $\partial M$ has zero mean curvature with respect to $g$, we have:
\begin{equation*}
0= u^{-\frac{n}{n-2}} \left(Hu + \frac{2(n-1)}{n-2} \partial_{\nu}(u)\right) \qquad \text{on } \partial M
\end{equation*}
where $H\geq0$ is the Euclidean mean curvature of $\partial \Omega$; this may be rearranged to
\begin{equation}
\label{eqn2}
-\frac{2}{(n-2)\omega_{n-1}} \partial_{\nu}(u) = \frac{1}{(n-1) \omega_{n-1}} Hu.
\end{equation}
Combining (\ref{eqn1}) and (\ref{eqn2}), we obtain:
\begin{equation}
\label{eqn3}
m- \frac{1}{2(n-1)\omega_{n-1}}
\int_M R_g u\inv dV_g= \frac{1}{(n-1)\omega_{n-1}} \int_{\partial M} Hu dA > \frac{1}{(n-1)\omega_{n-1}} \int_{\partial M} H dA,
\end{equation}
having used the fact $u\geq 1$, by the maximum principle. Indeed, $u$ is a superharmonic 
function that approaches one at infinity, with $\partial_{\nu}(u) \leq 0$ on $\partial M$ (cf. Lemma 11 of \cite{schwartz}). 
Strict inequality above holds for the following reason. If not, then $H(u-1)$ is identically zero on $\partial M$. There are no compact minimal hypersurfaces in Euclidean space, so $H$ is strictly positive at some point $p \in \partial M$. Then $u(p)=1$. Since $u$ achieves its global minimum at $p$, then $\partial_\nu(u)(p) > 0$ by the maximum principle or else $u \equiv 1$, either of which is a contradiction.

We apply the Minkowski inequality relating the integral of the mean curvature of $\partial \Omega$ to its area\footnote{Other names for this type of estimate 
are the Aleksandrov--Fenchel inequality and the isoperimetric inequality for quermassintegrals \cite{schneider}.  
Minkowski gave the first proof, for convex regions in $\R^3$ \cite{isoperimetric}.}.  
Let $\Omega_1, \ldots,
\Omega_k$ be the connected components of $\Omega$ with boundaries $\Sigma_1, \ldots, \Sigma_k$ of Euclidean areas $A_1, \ldots, A_k$.  Since
we assume $\Omega_i$ is mean-convex and star-shaped, Guan and Li's proof of the Minkowski inequality \cite{guan_li} applies to show
\begin{equation}
\label{ineq_mink}
\frac{1}{(n-1)\omega_{n-1}}\int_{\Sigma_i} H dA \geq \left(\frac{A_i}{\omega_{n-1}}\right)^{\frac{n-2}{n-1}},
\end{equation}
for each $i=1,\ldots,k$.  Combining this with (\ref{eqn3}) and noting $\partial M = \Sigma_1 \cup \ldots \cup \Sigma_k$, we have
\begin{equation*}
m- \frac{1}{2(n-1)\omega_{n-1}}
\int_M R_g u\inv dV_g > \left(\frac{A_1}{\omega_{n-1}}\right)^{\frac{n-2}{n-1}} + \ldots
	+ \left(\frac{A_k}{\omega_{n-1}}\right)^{\frac{n-2}{n-1}}. \qedhere
\end{equation*}
\end{proof}

\begin{remark}
There is precedent for using the Minkowski inequality to prove Penrose-like inequalities; for instance,
see the work of Gibbons on collapsing shells \cite{gibbons} and Lam on the case of graphs over 
$\R^n \sm \Omega$ \cite{lam}.
\end{remark}

\begin{proof}[Proof of Corollary \ref{cor_main}]
Inequality (\ref{ineq_area}) is immediate, since the integral is nonnegative, and 
$$\left(\frac{A_1}{\omega_{n-1}}\right)^{\frac{n-2}{n-1}} + \ldots
	+ \left(\frac{A_k}{\omega_{n-1}}\right)^{\frac{n-2}{n-1}} \geq
	 \left(\frac{A_1+\ldots+A_k}{\omega_{n-1}}\right)^{\frac{n-2}{n-1}}.$$
To prove (\ref{ineq_vol}), simply follow (\ref{ineq_area}) by the classical isoperimetric inequality \cite{isoperimetric} for $\Omega$:
\begin{equation*}
\left(\frac{A}{\omega_{n-1}}\right)^{\frac{1}{n-1}} \geq \left(\frac{V}{\beta_n}\right)^{\frac{1}{n}}.\qedhere
\end{equation*}
\end{proof}

We conclude this section by remarking that the star-shaped hypothesis in Theorem \ref{thm_main} and Corollary \ref{cor_main} can be removed. Huisken announced the Minkowski inequality (\ref{ineq_mink}) for outward-minimizing domains in $\R^n$ (which are mean-convex) (see \cite{guan_li}); the mean-convex case was proved by Freire and Schwartz \cite{FS}.

\subsection{Example: several distant black holes}
\label{sec_example}
We now construct an example to demonstrate that it is possible for Theorem \ref{thm_main} to give a better (larger) lower bound for the ADM mass than the RPI itself. From a physical point of view, this example will consist of several nearly-Schwarzschild black holes that are mutually far apart.

We restrict to dimension three, although the construction will work for dimension $3 \leq n \leq 7$. Fix a parameter $m>0$ to represent the mass of each black hole, and fix $\epsilon \in (0, \frac{1}{10})$. Fix a positive integer $k$ to represent the number of black holes. Given distinct points $\vec c_1, \ldots, \vec c_k \in \R^3$ (to be specified), define
\begin{align*}
u_i(\vec x) &= \frac{m}{2|\vec x - \vec c_i|}
\end{align*}
for $i=1, \ldots, k$, a positive harmonic function on $\R^3 \setminus \{\vec c_i\}$ that blows up near $\vec c_i$ and approaches zero at infinity.
The Riemannian metric $g_i=(1+u_i)^4 \delta$ on  $\R^3 \setminus \{\vec c_i\}$ is isometric to the two-ended Schwarzschild metric of mass $m$, shown on the left in Figure \ref{fig_schwarz}. 
\begin{figure}[ht]
\caption{Two-ended Schwarzschild manifold}
\begin{center}
\includegraphics[scale=0.6]{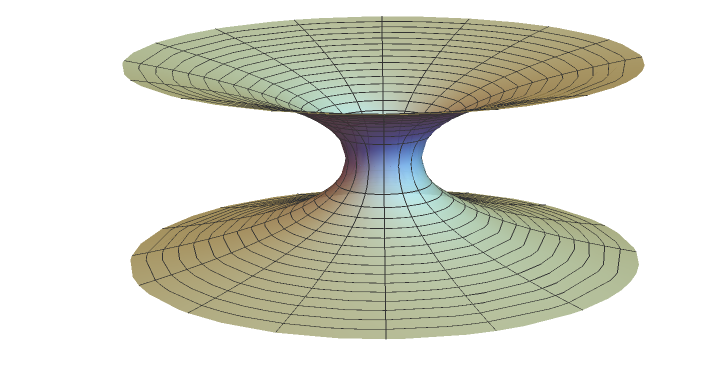}
\includegraphics[scale=0.6]{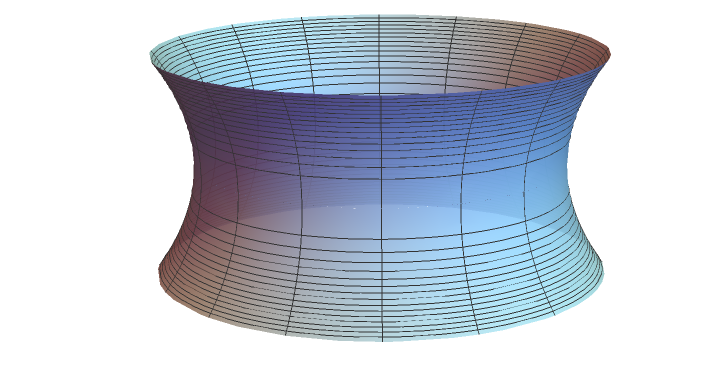}
\end{center}
\flushleft\footnotesize{On the left is a depiction of the Schwarzschild manifold of positive mass. The minimal ``neck'' is an area-minimizing 2-sphere, which we denote by $S_i$, fixed by a reflection symmetry.  On the right is a tubular neighborhood $U_i$ of $S_i$, with $\partial U_i$ of positive mean curvature pointing out of $U_i$.}
\label{fig_schwarz}
\end{figure}
In particular $(\R^3 \setminus \{\vec c_i\}, g_i)$ has a minimal surface $S_i$ located at the 
sphere of Euclidean radius $\frac{m}{2}$ about $\vec c_i$.  $S_i$ has Euclidean area $\pi m^2$ and $g_i$-area  $16\pi m^2$. Note that $S_i$ is contained in a tubular neighborhood $U_i$ in $\R^3$ (shown on the right in Figure \ref{fig_schwarz}) for which $\partial U_i$ has two components, each with  positive mean curvature with respect to $g_i$ in the direction pointing out of $U_i$. Realize $U_i$ as a Euclidean annulus about $\vec c_i$ with inner radius $< \frac{m}{2}$ and outer radius $>\frac{m}{2}$. By choosing the inner radius arbitrarily close to $\frac{m}{2}$, we can arrange that the minimum Euclidean area of surfaces homologous to $S_i$ in $U_i$ is at least $\pi m^2 (1+\epsilon)^{-1}$. Note also that $S_i$ minimizes $g_i$-area in its homology class in $U_i$.

We now describe the construction for $k=2$ before moving on to the general case. Fix $\vec c_1 \in \R^3$. We may choose $\vec c_2$ sufficiently far from $\vec c_1$ so that the $C^1$ norms of $u_1$ on $\ol{U_2}$ and of $u_2$ on $\ol{U_1}$ are arbitrarily small, and that $\ol{U_1}$ and $\ol{U_2}$ are disjoint. In particular, we may choose $\vec c_2$ so that the Riemannian metric on $\R^3 \setminus \{\vec c_1, \vec c_2\}$ given by
$$h = (1+u_1 + u_2)^4 \delta$$
satisfies the following properties:
\begin{itemize}
\item $\partial U_1$ and $\partial U_2$ have positive mean curvature in the directions pointing out of $U_1$ and $U_2$, with respect to $h$.
\item The measurement of hypersurface areas on $U_i$ with respect to $g_i$ and with respect to $h$ differ by a factor of at most $(1+\epsilon)$.
\end{itemize}
The next step is to argue that $U_1$ and $U_2$ each contain a minimal surface with respect to $h$, and the Euclidean and $h$-areas thereof are close to those of $S_i$. Consider the problem of finding an area-minimizer in $(\ol{U_i}, h)$ in the homology class of $S_i$. By standard arguments in geometric measure theory, there exists a smooth area-minimizer, call it $\tilde S_i$, in $U_i$, that is a minimal surface with respect to $h$.\footnote{These results are essentially due to Federer and Fleming \cites{fed_flem, fleming, federer}.
See also the appendix of \cite{schoen_yau}.} (This conclusion relies on the ambient dimension being less than eight, and on $\partial U_i$ having positive mean curvature in the direction pointing out of $U_i$.) Using the second bullet point above, we have
$$|\tilde S_i|_{h} \leq |S_i|_{h} \leq (1+\epsilon) |S_i|_{g_i} = 16\pi m^2 (1+\epsilon).$$
By the choice of $U_i$, the Euclidean area of $\tilde S_i$ must be at least $\pi m^2 (1+\epsilon)^{-1}$. The manifold $M$ we arrive at is $\R^3$ minus the open regions enclosed by $\tilde S_1$ and $\tilde S_2$, equipped with the Riemannian metric $h$. This is an asymptotically flat manifold, with zero scalar curvature (since $1+u_1+u_2)$ is harmonic), with minimal surface boundary $\tilde S_1 \cup \tilde S_2$. As noted after equation (\ref{nu_u}), the flux integral $-\frac{2}{(n-2)\omega_{n-1}}\displaystyle\lim_{r \to \infty}\int_{S_r} \partial_{\nu}(1+u_1+u_2) dA $ gives the ADM mass of $(M,h)$, which in this case in $2m$.

Now we carry out the construction for $k\geq 2$. Fix $\vec c_1 \in \R^3$. We may choose $\vec c_2, \ldots, \vec c_k$ all sufficiently far apart from each other and from $\vec c_1$ so that the $C^1$ norm of $u_1 + \ldots +\widehat{u_j}+\ldots+u_k$ (i.e., with $u_j$ omitted) on $\ol{U_j}$ is arbitrarily small, for each $j=1,\ldots, k$, and the $\ol{U_i}$ are pairwise disjoint.  In particular, we can choose $\vec c_i$ so that 
$$h= (1+u_1 + \ldots + u_k)^4 \delta,$$
a Riemannian metric on $\R^3 \setminus \{ \vec c_1, \ldots, \vec c_k\}$, satisfies
\begin{itemize}
\item $\partial U_i$ has positive mean curvature in direction pointing out of $U_i$ with respect to $h$.
\item The measurement of hypersurface areas on $U_i$ with respect to $g_i$ and with respect to $h$ differ by a factor of at most $1+\epsilon$.
\end{itemize}

The same argument applies to construct minimal surfaces $\tilde S_i$ in $U_i$, each of whose $h$-areas is at most $16\pi m^2 (1+\epsilon)$ and whose Euclidean areas is at least $\pi m^2(1+\epsilon)^{-1}$.
Let $M$ be the manifold obtained by removing from $\R^3$ the open regions bounded by the $k$ different $\tilde S_i$. Then $(M,h)$ is asymptotically flat, with zero scalar curvature, with ADM mass $km$, and with boundary $\displaystyle \cup_{i=1}^k \tilde S_i$ consisting of minimal surfaces.

We now compare the lower bounds given by the RPI and by Theorem \ref{thm_main} for $(M,h)$. Let $A$ be the total $h$-area of the minimal surfaces $\partial M$. If $\partial M$ is area-outer-minimizing in $(M,h)$, then the RPI gives a lower bound of $\frac{1}{2}\sqrt{\frac{A}{4\pi}}$ for the ADM of $(M,g)$. If not, then by the aforementioned results from geometric measure theory, there exists an area-outer-minimizing minimal surface (possibly disconnected) enclosing $\partial M$, of area at most $A$. The RPI would then apply to the manifold outside this minimal surface, with ADM mass lower bound at most $\frac{1}{2}\sqrt{\frac{A}{4\pi}}$. Thus, in all cases, the RPI's lower bound is at most
\begin{equation}
\label{eqn_lb1}
\frac{1}{2}\sqrt{\frac{16\pi m^2(1+\epsilon) k}{4\pi}} =  m \sqrt{1+\epsilon}\sqrt{k}.
\end{equation}
On the other hand, the lower bound in Theorem \ref{thm_main} is at least
\begin{equation}
\label{eqn_lb2}
k\sqrt{\frac{ \pi m^2}{4\pi (1+\epsilon)}} = \frac{ m k}{2\sqrt{1+\epsilon}}.
\end{equation}
In particular, the growth rate of the lower bound in Theorem \ref{thm_main} is proportional to the number of black holes $k$, rather than $\sqrt{k}$ as in the RPI. In fact, for $k\geq 5$, the original choice $\epsilon < \frac{1}{10}$ guarantees  (\ref{eqn_lb2}) exceeds (\ref{eqn_lb1}).

\subsection{Beyond the conformally flat case}
The following theorem is a generalization of Theorem \ref{thm_main} in which a scalar-flat background metric
$\ol g$ replaces the flat metric, $\delta$.
\begin{thm}
\label{thm_general}
Suppose $M = \R^n \sm \Omega$ for a smooth, mean-convex, bounded open set $\Omega$, of which every connected
component is star-shaped.  Let $\ol g$ be any asymptotically flat metric on $M$ with the 
following properties:
\begin{enumerate}
 \item[(i)] $\ol g$ has zero scalar curvature,
 \item[(ii)] $\ol g$ and $\delta$ agree on $\partial M$, and
 \item[(iii)] $\ol g$ and $\delta$ induce the same mean curvature on $\partial M$.
\end{enumerate}
Let $g = u^{\frac{4}{n-2}} \ol g$, where $u$ is a smooth, positive function tending to one at infinity.
Assume that $g$ is asymptotically flat, has nonnegative scalar curvature, and $\partial M = \partial \Omega$ 
has zero mean curvature in $(M,g)$.  Then:
\begin{equation}
\label{ineq_general}
m_{ADM}(g) > \left(\frac{A_1}{\omega_{n-1}}\right)^{\frac{n-2}{n-1}} + \ldots
	+ \left(\frac{A_k}{\omega_{n-1}}\right)^{\frac{n-2}{n-1}} + \frac{1}{2(n-1)\omega_{n-1}}
\int_M R_g u\inv dV_g,
\end{equation}
where, as before, $A_1, \ldots A_k$ are the Euclidean areas of the components of $\partial \Omega$.
\end{thm}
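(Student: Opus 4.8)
The plan is to reduce Theorem \ref{thm_general} to the computation already carried out in the proof of Theorem \ref{thm_main}, replacing the role of the flat metric $\delta$ by the scalar-flat background $\ol g$. The key observation is that the identity (\ref{eqn1}) did not genuinely use flatness of $\delta$, only that $\delta$ is scalar-flat and asymptotically flat. So first I would redo the integration by parts with $\ol g$ in place of $\delta$: since $R_{\ol g}=0$, the conformal scalar curvature formula gives $R_g = -\frac{4(n-1)}{n-2}u^{-\frac{n+2}{n-2}}\,\ol\Delta u$, hence nonnegative scalar curvature of $g$ is equivalent to $\ol\Delta u \le 0$. Integrating $\ol\Delta u$ over the region bounded by a large coordinate sphere and $\partial M$, using that $|\ol\nabla u|^2_{\ol g}$ and $\ol\Delta u$ are integrable (as noted in the preliminaries), and taking $r\to\infty$, yields
\begin{equation*}
-\frac{2}{(n-2)\omega_{n-1}}\lim_{r\to\infty}\int_{S_r}\partial_{\ol\nu}(u)\,dA_{\ol g}
= -\frac{2}{(n-2)\omega_{n-1}}\int_{\partial M}\partial_{\ol\nu}(u)\,dA_{\ol g}
-\frac{2}{(n-2)\omega_{n-1}}\int_M \ol\Delta u\,dV_{\ol g}.
\end{equation*}
The left side equals $m_{ADM}(g)$: this needs the standard fact that for $g=u^{\frac{4}{n-2}}\ol g$ with $\ol g$ asymptotically flat of mass zero — which it is, since $\ol g$ is scalar-flat and asymptotically flat, so by the PMT $m_{ADM}(\ol g)=0$, or more elementarily by a direct flux computation — the ADM mass of $g$ is captured by the $u$-flux term. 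Rewriting via $R_g$ and $dV_g=u^{\frac{2n}{n-2}}dV_{\ol g}$ reproduces exactly (\ref{eqn1}) with $\ol\nu$ and $dA_{\ol g}$ on the boundary.

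Next I would convert the boundary flux term into an integral of mean curvature. Here hypotheses (2) and (3) do the work: the conformal transformation of mean curvature under $g=u^{\frac{4}{n-2}}\ol g$ gives, on $\partial M$, the relation $0 = u^{-\frac{n}{n-2}}\big(H_{\ol g}\,u + \frac{2(n-1)}{n-2}\,\partial_{\ol\nu}(u)\big)$, where $H_{\ol g}$ is the mean curvature of $\partial M$ with respect to $\ol g$. But by assumption (3), $H_{\ol g} = H$, the Euclidean mean curvature of $\partial\Omega$; and by assumption (2) the induced metrics agree on $\partial M$, so $dA_{\ol g} = dA$ on $\partial M$. Therefore the boundary term is precisely $\frac{1}{(n-1)\omega_{n-1}}\int_{\partial M} H u\,dA$, exactly as in (\ref{eqn2})–(\ref{eqn3}). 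The maximum-principle argument that $u\ge 1$ carries over verbatim, since it only used $\ol\Delta u\le 0$, $u\to 1$ at infinity, and $\partial_{\ol\nu}(u)\le 0$ on $\partial M$ (the last from the mean curvature relation and $H\ge 0$, using mean-convexity of $\Omega$). Finally, applying the Guan–Li Minkowski inequality (\ref{ineq_mink}) to each component $\Sigma_i$ of $\partial\Omega$ — which is legitimate because $\Sigma_i$ is a mean-convex, star-shaped hypersurface in \emph{Euclidean} space, and $A_i$, $H$ are the Euclidean quantities — gives $\frac{1}{(n-1)\omega_{n-1}}\int_{\Sigma_i} H\,dA \ge (A_i/\omega_{n-1})^{\frac{n-2}{n-1}}$, and summing over $i$ produces (\ref{ineq_general}).

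The main obstacle is the identification of the limit flux term with $m_{ADM}(g)$ in the presence of the nontrivial background $\ol g$. In the flat case this was called ``straightforward to check'' because $g$ and $\delta$ differ only through $u$; now $g$ and $\ol g$ differ through $u$, but $\ol g$ itself differs from $\delta$, so one must verify that the difference $\ol g - \delta$ contributes nothing to the ADM mass integral in the limit. This follows from $m_{ADM}(\ol g)=0$ together with the fact that the ADM mass is additive under the relevant conformal-type perturbation — concretely, expanding $g_{ij}-\delta_{ij} = (u^{\frac{4}{n-2}}-1)\ol g_{ij} + (\ol g_{ij}-\delta_{ij})$ in the ADM surface integral, the second piece converges to $2(n-1)\omega_{n-1} m_{ADM}(\ol g)=0$ and the first piece converges to the $u$-flux term exactly as in Theorem \ref{thm_main}; the decay hypotheses in the definition of asymptotic flatness guarantee all cross terms and remainders vanish. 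One should also note that hypotheses (2) and (3), while stated as equalities of induced metric and mean curvature on $\partial M$, are exactly what is needed so that the Euclidean Minkowski inequality — which is a statement about $\partial\Omega\subset\R^n$ — can be fed the quantities that actually appear after the conformal boundary computation; no regularity issue arises since $\partial\Omega$ is smooth. The remaining steps are formally identical to the proof of Theorem \ref{thm_main}.
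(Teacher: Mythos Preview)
Your reduction to the computation of Theorem \ref{thm_main} is correct and matches the paper's approach, but there is a genuine gap at the point where you assert $m_{ADM}(\ol g)=0$. This claim is false in general, and your justification (``since $\ol g$ is scalar-flat and asymptotically flat, so by the PMT $m_{ADM}(\ol g)=0$'') misreads the positive mass theorem: the PMT says nonnegative scalar curvature on a \emph{complete} manifold forces nonnegative mass, with equality only for Euclidean space; it does not say scalar-flat implies zero mass. Indeed the Schwarzschild metric $g_m$ is scalar-flat with mass $m>0$. Moreover $(M,\ol g)$ has boundary and is not complete, so the PMT does not even apply directly. There is also no ``direct flux computation'' available: hypotheses (2) and (3) constrain $\ol g$ only along $\partial M$, not at infinity.

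What the argument actually needs---and what the paper proves---is the weaker statement $m_{ADM}(\ol g)\ge 0$. The flux identity yields
\[
m_{ADM}(g)-m_{ADM}(\ol g)=\frac{1}{(n-1)\omega_{n-1}}\int_{\partial M}Hu\,dA+\frac{1}{2(n-1)\omega_{n-1}}\int_M R_g u^{-1}\,dV_g,
\]
so nonnegativity of $m_{ADM}(\ol g)$ suffices to conclude. The paper obtains this by exploiting hypotheses (2) and (3) a second time: glue $\ol g$ on $\R^n\setminus\Omega$ to $\delta$ on $\Omega$ to obtain a Lipschitz metric $\tilde g$ on all of $\R^n$ that is smooth and scalar-flat away from $\partial\Omega$, with matching induced metrics and mean curvatures along $\partial\Omega$. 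Shi and Tam's version of the positive mass theorem for metrics with corners (Theorem 3.1 of \cite{shi_tam}, using that $\R^n$ is spin) then gives $m_{ADM}(\tilde g)\ge 0$, and $\tilde g$ and $\ol g$ agree outside a compact set so $m_{ADM}(\ol g)=m_{ADM}(\tilde g)\ge 0$. You correctly used (2) and (3) to replace $H_{\ol g}$ and $dA_{\ol g}$ by their Euclidean counterparts, but you missed that these same hypotheses are what make the gluing---and hence the crucial mass bound on $\ol g$---possible.
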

In other words, we merely assume that $g$ is conformal to a scalar-flat metric agreeing
with the Euclidean metric on $\partial M$ in a suitable first-order sense.  

\begin{proof}
The proof is nearly identical to that of Theorem \ref{thm_main}, with one additional step.  First, note the conformal transformation
laws for the scalar curvature and mean curvature are the same as in the proof of Theorem \ref{thm_main},
with $\delta$ replaced by $\ol g$, and $u$ is consequently $\ol g$-superharmonic.  (Here we are using (i)--(iii).)
The only other issue in extending the proof to this more general case is the following:
the integral of $\partial_{\ol \nu}(u)$ over a coordinate sphere at infinity measures the difference of the ADM masses of $g$ and $\ol g$:
\begin{equation}
\label{eqn_diff_adm}
-\frac{2}{(n-2)\omega_{n-1}}\lim_{r \to \infty}\int_{S_r} \partial_{\ol \nu}(u) \ol{dA} = m_{ADM}(g) - m_{ADM}(\ol g),
\end{equation}
where $\ol \nu$ and $\ol {dA}$ are the unit normal and area form with respect to $\ol g$.  
(Previously, in the case of Theorem \ref{thm_main}, $m_{ADM}(\ol g) = m_{ADM}(\delta)=0$.)

To recycle the proof of Theorem \ref{thm_main}, we need only show that $m_{ADM}(M, \ol g) \geq 0$ (since $\ol g$ and $\delta$ induce the same area and mean curvature on $\partial M$).
Define a Riemannian metric $\tilde g$ on $\R^n$ by gluing the metric $\ol g$
on $\R^n \sm \Omega$ and the metric $\delta$ defined on $\Omega$.  By (ii), $\tilde g$ is Lipschitz across
$\partial \Omega$ 
and is smooth and scalar-flat away from $\partial \Omega$.  Moreover, the mean curvatures of both sides 
of $\partial \Omega$ agree by (iii). By the ``positive mass theorem with corners'' (proved by Shi and Tam for spin manifolds \cite{shi_tam}, of which $\R^n$ is one, and by Miao without the spin assumption \cite{miao}), the ADM mass of $(M, \tilde g)$ 
is nonnegative. The proof is complete, since $\tilde g$ and $\ol g$
have the same ADM mass.
\end{proof}

\begin{remark}
Interestingly, Theorem \ref{thm_general} does not follow from the Riemannian
Penrose inequality in dimensions $3 \leq n \leq 7$, even if we decrease the right-hand side
of (\ref{ineq_general})
to $\left(\frac{A}{\omega_{n-1}}\right)^{\frac{n-2}{n-1}}$, where $A = A_1 + \ldots + A_k$.
The RPI estimates $m_{ADM}(g)$ from below only in terms of any area-outer-minimizing minimal surface in $(M,g)$. If $\partial M$ fails to be area-outer-minimizing, then the RPI is not sensitive to the area of $\partial M$. Further discussion along these lines is given in section \ref{sec_discuss}.
\end{remark}

\subsection{Removing the hypothesis on scalar curvature}
In Theorems \ref{thm_main} and \ref{thm_general}, the nonnegativity of scalar curvature implied superharmonicity of the conformal factor
$u$, which in turn implied $u \geq 1$.  In the next theorem, we derive a similar inequality without assuming
nonnegative scalar curvature.

\begin{thm}
\label{thm_general2}
Suppose $(M,g)$ is as in Theorem \ref{thm_general}, except remove the assumption that $g$ has nonnegative
scalar curvature.  Then
\begin{equation*}
m_{ADM}(g) > \left(\frac{A_1}{\omega_{n-1}}\right)^{\frac{n-2}{n-1}} + \ldots
	+ \left(\frac{A_k}{\omega_{n-1}}\right)^{\frac{n-2}{n-1}} + \frac{1}{2(n-1)\omega_{n-1}}
\int_M R_g u^{-2} dV_g.
\end{equation*}
\end{thm}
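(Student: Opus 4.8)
The plan is to recycle the proof of Theorem~\ref{thm_general} with a single modification: integrate by parts the quantity $\ln u$ instead of $u$. This is precisely what converts the weight $u^{-1}$ into $u^{-2}$ and, more importantly, produces an extra term of definite (nonnegative) sign, so that the conclusion $u\geq 1$ — which in Theorems~\ref{thm_main} and~\ref{thm_general} required superharmonicity of $u$, hence $R_g\geq 0$ — is no longer needed.

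First I would note that the conformal transformation laws are unchanged from Theorem~\ref{thm_general}: since $\ol g$ is scalar flat, $g=u^{\frac{4}{n-2}}\ol g$ has $R_g=-\frac{4(n-1)}{n-2}u^{-\frac{n+2}{n-2}}\ol\Delta u$ and $dV_g=u^{\frac{2n}{n-2}}\,\ol{dV}$, while on $\partial M$ (using that $g$ has zero mean curvature and that $\ol g$ agrees with $\delta$ to first order there) one has $-\frac{2}{(n-2)\omega_{n-1}}\partial_{\ol\nu}(u)=\frac{1}{(n-1)\omega_{n-1}}Hu$, with $H$ the Euclidean mean curvature of $\partial\Omega$. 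Tracking exponents then gives the algebraic identity
$$R_g\,u^{-2}\,dV_g=-\frac{4(n-1)}{n-2}\,u^{-1}(\ol\Delta u)\,\ol{dV}=-\frac{4(n-1)}{n-2}\Big(\ol\Delta(\ln u)+|\ol\nabla\ln u|_{\ol g}^{2}\Big)\ol{dV},$$
where the last step is the elementary identity $\ol\Delta(\ln u)=u^{-1}\ol\Delta u-u^{-2}|\ol\nabla u|_{\ol g}^2$. Since $u$ is smooth, positive, and tends to $1$, it is bounded away from $0$, so both $\ol\Delta(\ln u)$ and $|\ol\nabla\ln u|_{\ol g}^2$ are integrable on $(M,\ol g)$ (by the integrability of $\ol\Delta u$ and $|\ol\nabla u|_{\ol g}^2$ recorded in Section~\ref{sec_prelim}); in particular $R_g u^{-2}$ is $dV_g$-integrable.

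Next I would integrate the displayed identity over the region $B_r\subset M$ bounded by $\partial M$ and a large coordinate sphere $S_r$, apply the divergence theorem to the $\ol\Delta(\ln u)$ term, and let $r\to\infty$. The flux of $\ln u$ through $S_r$ differs from that of $u$ by $\int_{S_r}(u^{-1}-1)\,\partial_{\ol\nu}(u)\,\ol{dA}=O(r^{\,n-2-2p})\to 0$ (since $p>\frac{n-2}{2}$), so by~(\ref{eqn_diff_adm}) the limiting flux at infinity equals $-\frac{(n-2)\omega_{n-1}}{2}\big(m_{ADM}(g)-m_{ADM}(M,\ol g)\big)$; on the inner boundary I would use the mean-curvature relation to rewrite $\int_{\partial M}\partial_{\ol\nu}(\ln u)\,\ol{dA}$ in terms of $\int_{\partial M}H\,dA$ (recall $\ol{dA}=dA$ on $\partial M$). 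Solving for $m_{ADM}(g)$ produces the identity
$$m_{ADM}(g)=m_{ADM}(M,\ol g)+\frac{1}{2(n-1)\omega_{n-1}}\int_M R_g u^{-2}\,dV_g+\frac{1}{(n-1)\omega_{n-1}}\int_{\partial M}H\,dA+\frac{2}{(n-2)\omega_{n-1}}\int_M|\ol\nabla\ln u|_{\ol g}^2\,\ol{dV}.$$

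To finish, I would invoke the Shi--Tam gluing argument exactly as in the proof of Theorem~\ref{thm_general} to conclude $m_{ADM}(M,\ol g)\geq 0$, discard the last term since it is manifestly nonnegative, and apply the Guan--Li Minkowski inequality~(\ref{ineq_mink}) to each component $\Sigma_i$ of $\partial\Omega$ to bound $\frac{1}{(n-1)\omega_{n-1}}\int_{\partial M}H\,dA$ below by $\sum_i\left(\frac{A_i}{\omega_{n-1}}\right)^{\frac{n-2}{n-1}}$. The only genuinely new ingredient, and the crux of the argument, is the passage to $\ln u$: this is what both upgrades the integrand to $R_g u^{-2}$ and generates the spare nonnegative term $\frac{2}{(n-2)\omega_{n-1}}\int_M|\ol\nabla\ln u|^2_{\ol g}\ol{dV}$, which compensates for the loss of the sign hypothesis on $R_g$ (and the loss of $u\geq 1$). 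The remaining "work" is entirely routine — the integrability of $\ol\Delta(\ln u)$ and $|\ol\nabla\ln u|_{\ol g}^2$ and the $O(r^{\,n-2-2p})$ decay on $S_r$ — and follows at once from asymptotic flatness.
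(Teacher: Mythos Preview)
Your proposal is correct and is essentially the paper's own argument: the paper integrates the divergence of $\dfrac{\ol\nabla u}{u}$ over $M$, and since $\dfrac{\ol\nabla u}{u}=\ol\nabla(\ln u)$ this is literally the same computation as your ``integrate $\ln u$ by parts.'' The identity you display for $m_{ADM}(g)$, the discarding of the nonnegative $|\ol\nabla\ln u|_{\ol g}^2$ term, the Shi--Tam step for $m_{ADM}(\ol g)\geq 0$, and the Guan--Li Minkowski inequality all match the paper's proof line for line.
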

Note the weighting factor on the integral of scalar curvature is $u^{-2}$ rather than $u\inv$ as before, and this
integral may have any sign.  Penrose inequalities including a weighted integral of scalar curvature may be of interest in proving
the conjectured general Penrose inequality for slices of spacetimes that are not totally geodesic 
\cite{bray_khuri}.

\begin{proof}
Let $\ol g, u,$ and $g$ be as in the statement of Theorem \ref{thm_general}, except without the assumption
that $g$ has nonnegative scalar curvature.  
Since $u \to 1$ at infinity, $u>0$, and $\partial M$ is compact, $u$ is bounded above and below by positive constants.  

Next, since $g$ and $\ol g$ are both asymptotically flat, their scalar curvatures are both integrable with respect to the respective metrics. Since $u \to 1$ at infinity, their scalar curvatures are both integrable with respect to $\ol g$. Then as in (\ref{eq_conf_scalar_curv}), we have that $\frac{\ol \Delta u}{u}$ is integrable on $(M, \ol g)$. Also, since $g$ and $\ol g$ are asymptotically flat, we see that $|\ol \nabla u|_{\ol g}$ is $O(r^{-p-1})$, for $p>\frac{n-2}{2}$, where $\ol \nabla$ is the gradient with respect to $\ol g$. Then $\frac{|\ol \nabla u|^2_{\ol g}}{u^2}$ is $O(r^{-2p-2})$ and hence is integrable on $(M, \ol g)$, as $2p+2>n$. Thus
$$\ol{\div}\left(\frac{\ol \nabla u}{u}\right)=\frac{\ol \Delta u}{u} - \frac{|\ol \nabla u|^2_{\ol g}}{u^2}$$
is integrable on $(M,\ol g)$, where $\ol \div$ is $\ol g$-divergence. Integrating by parts yields
$$\lim_{r \to \infty} \int_{S_r} \frac{\partial_{\ol \nu}(u)}{u} \ol {dA} 
- \int_{\partial M} \frac{\partial_{\ol \nu}(u)}{u} \ol{dA} = \int_M \frac{\ol \Delta u}{u} \ol {dV} 
  -\int_M \frac{|\ol \nabla u|^2_{\ol g}}{u^2} \ol {dV}$$
where $S_r$ is the coordinate sphere of radius $r$, and $\ol{dV}$, $\ol{dA}$ and $\ol \nu$ are the volume form,
area form, and unit normal with respect to $\ol g$.  In the first term above, the $u$ in the denominator may be ignored, since $u \to 1$ at infinity.  
Multiplying by $-\frac{2}{(n-2)\omega_{n-1}}$, applying the conformal transformation laws for the ADM
mass (\ref{eqn_diff_adm}), mean curvature (\ref{eqn2}), scalar curvature (\ref{eq_conf_scalar_curv}), and volume form, and discarding the signed $|\ol \nabla u|^2$ term yields 
the inequality:
$$m_{ADM}(g) - m_{ADM}(\ol g) > \frac{1}{(n-1)\omega_{n-1}}\int_{\Sigma} \ol H \ol {dA} + \frac{1}{2(n-1)\omega_{n-1}}
\int_M R_g u^{-2} dV_g,$$
where $\ol H$ is the mean curvature of $\Sigma$ with respect to $\ol g$.  The hypotheses 
on $\ol g$ and $\ol H$ along $\partial M$ allow us to 
replace $\ol H \ol {dA}$ with $H dA$.  To complete the proof, apply the Minkowski inequality to $\int_\Sigma H dA$
as in the proof of Theorem \ref{thm_main}, and use the nonnegativity of the ADM mass of $\ol g$ as in
the proof of Theorem \ref{thm_general}.
\end{proof}

\subsection{Discussion of lack of area-outer-minimizing hypothesis}
\label{sec_discuss}
In the Riemannian Penrose inequality (Theorem \ref{thm_rpi}), it is well-known that the 
hypothesis that the boundary $\Sigma$ be area-outer-minimizing is crucial.  Indeed, 
one may easily construct rotationally symmetric, asymptotically flat manifolds $(M,g)$ of nonnegative scalar
curvature and minimal boundary $\Sigma$ of area $A$ such that the ratio $m_{ADM}(g)/A^{\frac{n-2}{n-1}}$ is arbitrarily
small.  In this case, $\Sigma$ is ``hidden'' behind some area-outer-minimizing minimal surface 
$\tilde \Sigma$ of area $\tilde A$, and the Riemannian Penrose inequality holds for the region 
exterior to $\tilde \Sigma$ (see figure 1 of \cite{imcf}).

The results of this paper do not require the boundary to be area-outer-minimizing, which is perhaps philosophically
the reason why we do not recover a sharp version of the RPI for conformally flat manifolds.
This phenomenon (together with the difficulty of utilizing the area-outer-minimizing hypothesis) was pointed out
by Bray and Iga \cite{bray_iga}.  Nevertheless, rotationally symmetric manifolds are conformally flat, 
so the above examples of hidden minimal surfaces make it interesting that we can prove any such inequality at all without an area-outer-minimizing hypothesis.

\section{Inequalities for zero area singularities}
\label{sec_zas}
We now recall the idea of a \emph{zero area singularity}, which is in a sense dual to the idea of a black hole manifesting 
as a minimal surface.  Suppose $M$ is a smooth manifold with smooth compact boundary $\partial M$.  Let $g$ be a smooth Riemannian metric defined on the interior $M \sm \partial M$.
A connected component $S$ of $\partial M$ is said to be a \emph{zero area singularity} (ZAS) of $g$ if for 
all sequences $\{S_i\}$ of hypersurfaces in $M \sm \partial M$ converging in the $C^1$ sense to $S$, we have
$$\lim_{i \to \infty} |S_i|_g = 0,$$
where $|S_i|_g$ is the area of $S_i$ with respect to $g$.  (Note that $C^1$ convergence depends only on the smooth structure of $M$.)  The study of ZAS was initiated by Bray \cite{bray_npms}.  For more details and precise definitions, see \cite{zas} and the survey paper \cite{bray_survey}. Other work on ZAS was carried out by Robbins \cite{zas_robbins} and also in \cite{thesis}.

The motivating example of a manifold containing a zero area singularity is the Schwarz\-schild manifold of negative mass,
described as follows.  For $n \geq 3$ and a real parameter $m < 0$, suppose $M$ is $\R^n$ minus the open ball about 
the origin of radius
$\left(\frac{|m|}{2}\right)^{\frac{1}{n-2}}.$  As in (\ref{eqn_schwarzschild}), let $g_m$ be the metric on $M$ defined by
$$g_m = \left(1+\frac{m}{2|x|^{n-2}}\right)^{\frac{4}{n-2}} \delta.$$
It is not difficult to see that the boundary sphere $\partial M$ is a ZAS of the metric $g_m$, since the conformal factor vanishes on $\partial M$.

In the context of this paper, it is natural to restrict to the class of metrics containing \emph{regular} ZAS.
This means that on an open set $U$ containing a ZAS boundary component $S$, the metric is given by
$g = u^{\frac{4}{n-2}} \ol g$, where $\ol g$ is a smooth Riemannian metric on $U$ (up to and including $S$) 
and $u\geq 0$ is a smooth function on $U$ vanishing only on $S$, with
$\partial_{\ol \nu} u > 0$ on $S$.  
(Here $\ol \nu$ is the unit normal to $S$ with respect to $\ol g$, pointing into the manifold.)  
For instance, the singularity in the Schwarzschild manifold of negative mass is a regular ZAS,
with $\ol g = \delta$.

Analogous to the definition of the mass of a collection of black holes of total area $A$ to be
$\frac{1}{2} \left(\frac{A}{\omega_{n-1}}\right)^{\frac{n-2}{n-1}}$, Bray proposed the following formula
to define the mass of a collection of regular ZAS $\Sigma$ of a metric $g = u^{\frac{4}{n-2}} \ol g$:
$$m_{ZAS}(\Sigma) = -\frac{2}{(n-2)^2}\left(\frac{1}{\omega_{n-1}}\int_\Sigma (\partial_{\ol \nu} u)^{\frac{2(n-1)}{n}} \ol{dA} \right)^{\frac{n}{n-1}},$$
where $\ol {dA}$ is the area measure on $\Sigma$ with respect to $\ol g$ \cites{bray_npms,zas,bray_survey}.  This negative real number depends only on the local geometry of $g$ near $\Sigma$ (not on the pair $(\ol g,u)$) and produces the value $m$ for the ZAS in the Schwarzschild
manifold of mass $m<0$ \cite{zas}. 

Motivated by the Riemannian Penrose inequality, Bray conjectured that in an asymptotically flat
manifold $(M,g)$ of nonnegative scalar curvature for which every component of the compact boundary 
$\Sigma = \partial M$ is a ZAS of $g$, the ADM mass ought to be bounded below in terms of the ZAS mass:
\begin{equation}
\label{eqn_rzi}
m_{ADM}(g) \geq m_{ZAS}(\Sigma).
\end{equation}
This inequality remains a conjecture.  Some special cases in which (\ref{eqn_rzi}) is known are:
\begin{itemize}
\item $n=3$ and $\Sigma$ is connected (due to Robbins \cite{zas_robbins}, using the inverse mean curvature flow technique of Huisken--Ilmanen \cite{imcf}), 
\item $3 \leq n \leq 7$ and $g = u^{\frac{4}{n-2}} \ol g$,  where $(M, \ol g)$ satisfies the hypothesis of the Riemannian
Penrose inequality (Theorem \ref{thm_rpi}); see \cites{bray_npms, bray_survey, zas, jau, thesis}, or
\item $(M,g)$ is a graph over $\R^n$ in Minkowski space $\R^{n,1}$, 
and each of the ZAS is a level set of the graph function \cite{lam_thesis}.  (But note that Lam's definition of $m_{ZAS}$
in \cite{lam_thesis} is somewhat different.)
\end{itemize}

We emphasize that the positive mass theorem does not apply to manifolds that contain regular ZAS; 
such manifolds are incomplete and may have negative ADM mass.  Therefore (\ref{eqn_rzi}) may be viewed as a 
generalization of the PMT, providing a lower bound for the ADM mass of manifolds that contain certain types of singularities.

\vspace{5mm}
\paragraph{\emph{Setup:} }
Our goal is to prove a version of (\ref{eqn_rzi}) in the conformally flat case.  
For $n \geq 3$, suppose that $\Omega \subset \R^n$ is a smooth bounded open set, such that $M = \R^n \sm \Omega$
is connected.  Let $\Sigma = \partial M$. 
Assume $u$ is a function on $M$ with the following properties:
\begin{enumerate}
 \item[(i)] $u \to 1$ at infinity, $u\geq 0$, and $g=u^{\frac{4}{n-2}} \delta$ is asymptotically flat, 
 \item[(ii)] $u^{-1}(0)=\Sigma$, and
 \item[(iii)] $\Delta u \leq 0$ (equivalently, $g$ has nonnegative scalar curvature).
\end{enumerate}
By the maximum principle, $u > 0$ in the interior of $M$ and $\partial_\nu(u) > 0$ on $\Sigma$, where the Euclidean unit normal $\nu$ to $\Sigma$ points into $M$.  Therefore, each component of $\Sigma$
is a regular ZAS of $g$.

We first give a result that estimates the ADM mass of $(M,g)$ from below in terms of the ZAS mass
and the Euclidean area $A$ of $\Sigma$.  
\begin{lemma} With the above setup,
\label{lemma_zas_lower_bound}
$$m_{ADM}(g) \geq m_{ZAS}(\Sigma) - \frac{1}{2}\left(\frac{A}{\omega_{n-1}}\right)^{\frac{n-2}{n-1}}.$$
\end{lemma}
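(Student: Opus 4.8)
The plan is to repeat the Stokes-theorem computation from the proof of Theorem~\ref{thm_main} up to equation~(\ref{eqn1}), and then, in place of the Minkowski inequality (which there required a minimal boundary), to bound the remaining boundary integral by H\"older's inequality followed by a precisely tuned Young's inequality, so that the $m_{ZAS}$ term reappears with the correct constant.

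First I would observe that the derivation of (\ref{eqn1}) uses only that $u$ is smooth up to $\Sigma$, that $u\to 1$ at infinity, and the integrability (supplied by asymptotic flatness) of $\Delta u$, equivalently of $R_g u^{-1}$ against $dV_g$; it uses no boundary condition on $\partial M$. In the present setup each component of $\Sigma$ is a regular ZAS, so $u$ is smooth up to $\Sigma$ and $\Delta u$ is bounded there, while at infinity it decays, so $\int_M \Delta u\, dV$ is finite and (\ref{eqn1}) applies verbatim. Thus, writing $m = m_{ADM}(g)$ and letting $\nu$ be the Euclidean unit normal on $\Sigma = \partial\Omega$ pointing into $M$,
\[
m = -\frac{2}{(n-2)\omega_{n-1}}\int_\Sigma \partial_\nu(u)\, dA + \frac{1}{2(n-1)\omega_{n-1}}\int_M R_g u^{-1}\, dV_g.
\]
Since $R_g\ge 0$, the last integral is nonnegative, so $m \ge -\frac{2}{(n-2)\omega_{n-1}}\int_\Sigma \partial_\nu(u)\, dA$; by the maximum principle $\partial_\nu(u)>0$ on $\Sigma$, so the right side is a negative number, of the same nature as $m_{ZAS}(\Sigma)$.

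The heart of the argument is to bound $\int_\Sigma \partial_\nu(u)\, dA$ from above. Write $\phi = \partial_\nu(u)\ge 0$ on $\Sigma$; since $\ol g = \delta$ here we have $m_{ZAS}(\Sigma) = -\frac{2}{(n-2)^2}\big(\frac{1}{\omega_{n-1}}\int_\Sigma \phi^{\frac{2(n-1)}{n}}\, dA\big)^{\frac{n}{n-1}}$ and $A = \int_\Sigma 1\, dA$. By H\"older's inequality with conjugate exponents $\frac{2(n-1)}{n}$ and $\frac{2(n-1)}{n-2}$ (both at least $1$ for $n\ge 3$),
\[
\int_\Sigma \phi\, dA \le \left(\int_\Sigma \phi^{\frac{2(n-1)}{n}}\, dA\right)^{\frac{n}{2(n-1)}} A^{\frac{n-2}{2(n-1)}};
\]
call the two factors on the right $a$ and $b$. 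Now I would apply Young's inequality $ab \le \frac{\lambda}{2}a^2 + \frac{1}{2\lambda}b^2$ with the weight $\lambda = \frac{2}{(n-2)\,\omega_{n-1}^{1/(n-1)}}$. A short computation shows this value of $\lambda$ is chosen exactly so that $\frac{2}{(n-2)\omega_{n-1}}\cdot\frac{\lambda}{2}a^2 = -m_{ZAS}(\Sigma)$ and $\frac{2}{(n-2)\omega_{n-1}}\cdot\frac{1}{2\lambda}b^2 = \frac12\big(\frac{A}{\omega_{n-1}}\big)^{\frac{n-2}{n-1}}$. Feeding these back into the lower bound for $m$ displayed above yields precisely the inequality in the lemma.

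The only genuine difficulty is bookkeeping: one must recognize that the exponent $\frac{2(n-1)}{n}$ built into $m_{ZAS}$ is exactly the H\"older exponent that interpolates $L^1(\Sigma)$ between $L^0(\Sigma)$ (the area) and $L^{2(n-1)/n}(\Sigma)$, and then carry the powers of $\omega_{n-1}$ and $(n-2)$ carefully enough to see that a single weight $\lambda$ in Young's inequality sends both output constants to their targets $\frac{2}{(n-2)^2}$ and $\frac12$ simultaneously. There is no sharpness subtlety to fret over, since the resulting constant is far from optimal --- already the negative-mass Schwarzschild metric, for which $m_{ADM} = m_{ZAS}$, satisfies the lemma with room to spare.
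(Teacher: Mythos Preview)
Your proof is correct and follows essentially the same route as the paper: integrate $\Delta u\le 0$ by parts, apply H\"older with exponents $\tfrac{2(n-1)}{n}$ and $\tfrac{2(n-1)}{n-2}$, and then split the resulting product into the $m_{ZAS}$ term and the area term. The only cosmetic difference is that you phrase the last step as a weighted Young inequality, whereas the paper adds and subtracts $\tfrac12(A/\omega_{n-1})^{(n-2)/(n-1)}$ and minimizes the underbraced expression as a quadratic in $x=(A/\omega_{n-1})^{(n-2)/2(n-1)}$ (Bray's trick); these two devices are equivalent and yield the identical constants.
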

\begin{proof}
Integrating $\Delta u \leq 0$ over $M$ gives the inequality (cf. the proof of Theorem \ref{thm_main}):
$$m \geq -\frac{2}{(n-2)\omega_{n-1}}\int_{\partial M} \partial_{\nu}(u) dA,$$
where $m=m_{ADM}(g)$. 
By H\"older's inequality, 
\begin{align*}
m &\geq -\frac{2}{(n-2)\omega_{n-1}} \left(\int_\Sigma (\partial_\nu u)^{\frac{2(n-1)}{n}} dA \right)
^{\frac{n}{2(n-1)}} A^{\frac{n-2}{2(n-1)}}\\
  &= \underbrace{\frac{1}{2}\left(\frac{A}{\omega_{n-1}}\right)^{\frac{n-2}{n-1}}-\frac{2}{(n-2)} \left(\frac{1}{\omega_{n-1}}\int_\Sigma (\partial_\nu u)^{\frac{2(n-1)}{n}} dA \right)
^{\frac{n}{2(n-1)}} \left(\frac{A}{\omega_{n-1}}\right)^{\frac{n-2}{2(n-1)}}} - \frac{1}{2}\left(\frac{A}{\omega_{n-1}}\right)^{\frac{n-2}{n-1}}.
\end{align*}
We invoke an argument of Bray \cites{bray_npms,zas}, viewing the underbraced terms as a degree-two polynomial $\frac{1}{2}x^2-bx$ in the 
variable $x = \left(\frac{A}{\omega_{n-1}}\right)^{\frac{n-2}{2(n-1)}}$.  Minimizing over $x\in \R$ gives
$$m\geq -\frac{2}{(n-2)^2}\left(\frac{1}{\omega_{n-1}}\int_\Sigma (\partial_\nu u)^{\frac{2(n-1)}{n}} dA \right)
^{\frac{n}{n-1}}- \frac{1}{2}\left(\frac{A}{\omega_{n-1}}\right)^{\frac{n-2}{n-1}}.$$
Recognizing the first term on the right-hand side as $m_{ZAS}(\Sigma)$ completes the proof.
\end{proof}

The estimate in Lemma \ref{lemma_zas_lower_bound} is unsatisfactory for the reason that the error term 
is additive rather than multiplicative. The following theorem provides a remedy.
\begin{thm} With the above setup,
\label{thm_zas}
$$m_{ADM}(g) \geq m_{ZAS}(\Sigma) \left(1+\frac{1}{4}\iota^2\right),$$
where 
$$\iota = \left.\left(\frac{A}{\omega_{n-1}}\right)^{\frac{n-2}{n-1}} \middle/ \left(\frac{V}{\beta_n}\right)^{\frac{n-2}{n}}\right.$$
is the isoperimetric ratio of $\Omega$.  Here, $A$ and $V$ are the Euclidean area and volume of 
$\partial \Omega$ and $\Omega$.
\end{thm}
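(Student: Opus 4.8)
The plan is to follow the proof of Lemma~\ref{lemma_zas_lower_bound} up through the H\"older step and then, rather than completing the square over all of $\mathbb{R}$, to feed in a sharper estimate involving the volume of $\Omega$. Abbreviate $m=m_{ADM}(g)$, $A=|\Sigma|_\delta$, $V=\vol(\Omega)$, and set
$$Q=\frac{2}{n-2}\left(\frac{1}{\omega_{n-1}}\int_\Sigma(\partial_\nu u)^{\frac{2(n-1)}{n}}\,dA\right)^{\frac{n}{2(n-1)}},\qquad a=\left(\frac{A}{\omega_{n-1}}\right)^{\frac{n-2}{2(n-1)}},\qquad b=\left(\frac{V}{\beta_n}\right)^{\frac{n-2}{n}}.$$
As in the proof of Lemma~\ref{lemma_zas_lower_bound} (integrate $\Delta u\le 0$ over $M$ and apply H\"older's inequality on $\Sigma$), one obtains $m\ge -Qa$; comparing with the definition of $m_{ZAS}$ (here the background is $\delta$, so $\partial_{\overline\nu}=\partial_\nu$) shows $Q^2=-2\,m_{ZAS}(\Sigma)$, i.e.\ $m_{ZAS}(\Sigma)=-\tfrac12 Q^2$. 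Note $Q,a,b>0$ and $\iota=a^2/b$. Since $m_{ZAS}(\Sigma)<0$, the asserted inequality $m\ge m_{ZAS}(\Sigma)\bigl(1+\tfrac14\iota^2\bigr)$ is equivalent to $2a\le Q\bigl(1+\tfrac14\iota^2\bigr)$.

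Now $1+\tfrac14\iota^2-\iota=(1-\tfrac12\iota)^2\ge 0$, so it suffices to prove $2a\le Q\iota$; substituting $\iota=a^2/b$ and cancelling the factor $a>0$, this is in turn equivalent to $2b\le Qa$, that is,
\begin{equation}
2\left(\frac{V}{\beta_n}\right)^{\frac{n-2}{n}}\;\le\;\frac{2}{n-2}\left(\frac{1}{\omega_{n-1}}\int_\Sigma(\partial_\nu u)^{\frac{2(n-1)}{n}}\,dA\right)^{\frac{n}{2(n-1)}}\left(\frac{A}{\omega_{n-1}}\right)^{\frac{n-2}{2(n-1)}}.\tag{$\star$}
\end{equation}
Thus the theorem reduces to the potential-theoretic estimate $(\star)$, which no longer mentions the ADM mass.

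To prove $(\star)$, let $h$ be the unique harmonic function on $M$ with $h=0$ on $\Sigma$ and $h\to 1$ at infinity; equivalently, $1-h$ is the equilibrium potential of $\overline{\Omega}$. Since $u-h$ is superharmonic on $M$, vanishes on $\Sigma$, and tends to $0$ at infinity, the maximum principle gives $u\ge h\ge 0$ on $M$; as $u=h=0$ on $\Sigma$, this forces $\partial_\nu u\ge \partial_\nu h>0$ on $\Sigma$ (positivity of $\partial_\nu h$ is the Hopf lemma). Hence $\int_\Sigma\partial_\nu u\,dA\ge\int_\Sigma\partial_\nu h\,dA=(n-2)\,\omega_{n-1}\,\capac(\Omega)$, where $\capac$ is normalized so that a ball of radius $\rho$ has capacity $\rho^{n-2}$, the identity being the divergence theorem applied to $h$ between $\Sigma$ and a large coordinate sphere. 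Applying Jensen's inequality to the convex function $t\mapsto t^{\frac{2(n-1)}{n}}$ against the probability measure $A^{-1}\,dA$ on $\Sigma$ gives $\int_\Sigma(\partial_\nu u)^{\frac{2(n-1)}{n}}\,dA\ge A^{-\frac{n-2}{n}}\bigl(\int_\Sigma\partial_\nu u\,dA\bigr)^{\frac{2(n-1)}{n}}$. Inserting this bound and the capacity lower bound into the right-hand side of $(\star)$ and simplifying (the powers of $\omega_{n-1}$, of $A$, and of $n-2$ all cancel), one finds that $(\star)$ is implied by $\capac(\Omega)\ge (V/\beta_n)^{(n-2)/n}$, which is exactly the classical isocapacitary (capacity--volume) inequality --- the ball of volume $V$ minimizes capacity --- with equality for balls. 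This proves $(\star)$, and hence the theorem.

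The step I expect to be the crux is the reduction itself: recognizing that, after $m\ge -Qa$, Bray's completion-of-the-square argument leaves precisely the relation $2a\le Q\iota$ to be established, and then seeing that this is equivalent to the clean inequality $(\star)$. Once $(\star)$ is isolated, the remaining work is the comparison $\partial_\nu u\ge\partial_\nu h$ on $\Sigma$ (which uses that $u$ is superharmonic with the same vanishing boundary data as $h$, plus the Hopf lemma to guarantee $\capac(\Omega)>0$) together with Jensen's inequality, after which $(\star)$ collapses to the isocapacitary inequality; if a self-contained treatment is wanted, one should include a short proof of the latter, e.g.\ via the coarea formula and the Euclidean isoperimetric inequality, or via Schwarz symmetrization.
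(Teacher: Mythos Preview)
Your proof is correct and follows essentially the same route as the paper. Both arguments rest on the same three ingredients: the bound $m\ge -Qa$ from integrating $\Delta u\le 0$ and H\"older; the comparison $\partial_\nu u\ge \partial_\nu\varphi$ with the harmonic capacitor to get $Qa\ge 2\capac(\Omega)$; and the Poincar\'e--Faber--Szeg\"o inequality $\capac(\Omega)\ge (V/\beta_n)^{(n-2)/n}$, yielding your key estimate $(\star)$, i.e.\ $2b\le Qa$ (equivalently the paper's area bound $a^2\le \tfrac{\iota^2}{2}|m_{ZAS}|$). The only cosmetic difference is in the final algebra: the paper first invokes Lemma~\ref{lemma_zas_lower_bound} (Bray's quadratic minimization, i.e.\ $(a-Q)^2\ge 0$) and then inserts the area bound, whereas you go directly from $m\ge -Qa$ via $(1-\tfrac{1}{2}\iota)^2\ge 0$; these are two rearrangements of the same completion of the square. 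One small wording point: after $m\ge -Qa$ the desired inequality is \emph{implied by} (not ``equivalent to'') $2a\le Q(1+\tfrac14\iota^2)$, but this does not affect the argument.
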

Recall the classical isoperimetric inequality is the statement $\iota \geq 1$ \cite{isoperimetric}.

\remark Since $m_{ZAS}(\Sigma)<0$, the estimate of Theorem \ref{thm_zas} is weaker than the conjectured
inequality (\ref{eqn_rzi}). In the model case in which $\Omega$ is a round ball and $u$ is harmonic (or equivalently,
$(M,g)$ is a negative-mass Schwarzschild manifold), the inequality is suboptimal by a factor of $\frac{5}{4}$. Nevertheless, no general version of (\ref{eqn_rzi}) is known when more than one ZAS is present, even in dimension three.

The idea of the proof is to use Lemma \ref{lemma_zas_lower_bound} in conjunction with an upper bound 
on the area $A$ in terms of the isoperimetric ratio of $\Omega$ and the absolute value of the ZAS mass.  Before
proceeding, recall that the \emph{capacity} of the bounded open set $\Omega \subset \R^n$ is the number:
\begin{equation}
\label{eqn_capac}
\capac(\Omega) = \frac{1}{(n-2)\omega_{n-1}} \inf_\psi \left\{\int_{\R^n \sm \Omega} |\nabla \psi|^2 dV\right\},
\end{equation}
where the infimum is taken over all smooth functions $\psi$ on $\R^n \sm \Omega$ that vanish on $\partial \Omega$
and approach one at infinity.  (The geometric quantities in this expression are taken with respect to the Euclidean
metric.)  The infimum is attained by the unique harmonic
function $\varphi$ that vanishes on $\Sigma=\partial \Omega$ and approaches one at infinity. The value of the capacity of $\Omega$ can also be written as:
\begin{align}
\capac(\Omega) &= \frac{1}{(n-2)\omega_{n-1}} \int_{\R^n \sm \Omega} |\nabla \varphi|^2 dV && \text{($\varphi$ achieves infimum)}\nonumber\\
&= \frac{1}{(n-2)\omega_{n-1}} \int_{\R^n \sm \Omega} \Div(\varphi \nabla \varphi) - \cancel{\varphi \Delta \varphi} dV&& \text{(identity; $\Delta \varphi=0$)}\nonumber\\
&= \frac{1}{(n-2)\omega_{n-1}} \left( \lim_{r \to \infty} \int_{S_r} \varphi \partial_\nu \varphi dA - \cancel{  \int_{\Sigma} \varphi \partial_\nu \varphi  dA}\right)&& \text{(divergence theorem; $\varphi|_{\Sigma}=0$)}\nonumber\\
&= \frac{1}{(n-2)\omega_{n-1}} \left( \lim_{r \to \infty} \int_{S_r} \partial_\nu \varphi dA\right)&& \text{($\varphi \to 1$ at infinity)}\nonumber\\
&=\frac{1}{(n-2)\omega_{n-1}} \int_{\Sigma} \partial_\nu\varphi dA&& \text{(divergence theorem; $\Delta \varphi=0$).} \label{eqn_capac2}
\end{align}
In these calculations, the unit normal $\nu$ always points toward infinity.

\begin{proof}[Proof of Theorem \ref{thm_zas}]
Let $\varphi$ be as above.
By the superharmonicity of $u$ and the maximum principle applied to $u-\varphi$, we have $\partial_\nu \varphi \leq \partial_\nu u$
on $\Sigma$, so by (\ref{eqn_capac2}):
$$\capac(\Omega) \leq \frac{1}{(n-2)\omega_{n-1}} \int_\Sigma \partial_\nu u dA.$$
By H\"{o}lder's inequality,
$$\capac(\Omega) \leq \frac{1}{(n-2)\omega_{n-1}} \left(\int_\Sigma (\partial_\nu u)^{\frac{2(n-1)}{n}} dA \right)
^{\frac{n}{2(n-1)}} A^{\frac{n-2}{2(n-1)}}.$$
The inequality of Poincar\'e--Faber--Szeg\"o \cite{isoperimetric} relating the capacity of a region in $\R^n$ to its volume states (see the appendix herein for a proof):
$$\capac(\Omega) \geq \left(\frac{V}{\beta_n}\right)^{\frac{n-2}{n}}.$$
Combining the last two inequalities and squaring, we have:
$$\left(\frac{V}{\beta_n}\right)^{\frac{2(n-2)}{n}} \leq \frac{1}{(n-2)^2} \left(\frac{1}{\omega_{n-1}}\int_\Sigma (\partial_\nu u)^{\frac{2(n-1)}{n}} dA \right)
^{\frac{n}{n-1}} \left(\frac{A}{\omega_{n-1}}\right)^{\frac{n-2}{n-1}}.$$
Using the definition of ZAS mass and isoperimetric ratio, this becomes:
\begin{equation}
\label{eqn_area_upper_bound}
\left(\frac{A}{\omega_{n-1}}\right)^{\frac{n-2}{n-1}} \leq \frac{\iota^2}{2} |m_{ZAS}(\Sigma)|.
\end{equation}
Inequality (\ref{eqn_area_upper_bound}) and the estimate from Lemma \ref{lemma_zas_lower_bound} imply
the result.
\end{proof}

\begin{remark}
Theorem \ref{thm_zas} does not immediately generalize to background metrics $\ol g$ as in Theorem \ref{thm_general}, for the reason that the capacity--volume inequality depends on
the global Euclidean nature of the region outside $\Omega$.  On the other hand, Theorem \ref{thm_zas} 
requires no star-shaped or mean-convexity assumptions on $\Omega$.
\end{remark}

\section{A mixed inequality for black holes and ZAS}
\label{sec_mixed}
Based on considerations in Newtonian physics pertaining to potential energy, Bray conjectured that 
in an asymptotically flat manifold of nonnegative scalar curvature with boundary $\Sigma$ consisting of area-outer-minimizing minimal
surfaces $\Sigma_+$ (of total area $|\Sigma_+|_g$) and zero area singularities $\Sigma_-$, the ADM mass ought to 
be bounded below as follows \cite{bray_npms}, cf. \cite{zas}:
\begin{equation}
\label{ineq_mixed}
m_{ADM}(g) \geq \frac{1}{2}\left(\frac{|\Sigma_+|_g}{\omega_{n-1}}\right)^{\frac{n-2}{n-1}}
+m_{ZAS}(\Sigma_-).
\end{equation}
Consider this problem in the conformally flat case, with the following setup.  
Suppose that $\Omega_+$ and $\Omega_-$ are smooth bounded open sets in $\R^n$ whose closures do not 
intersect.  Assume that $\Omega_+$ is mean-convex and every connected component is star-shaped.  
Set $\Omega = \Omega_{+} \cup \Omega_{-}$, and assume $M=\R^n \sm \Omega$ is connected.  
Let $\Sigma_{\pm}$ be the connected components of $\partial \Omega_{\pm}$, so 
$\partial M = \Sigma_+ \cup \Sigma_-$.
Let $u \geq 0$ be a smooth function on $M$ with the following properties:
\begin{enumerate}
 \item[(i)] $u^{-1}(0)=\Sigma_-$,
 \item[(ii)] $u \to 1$ at infinity and $g=u^{\frac{4}{n-2}} \delta$ is asymptotically flat,
 \item[(iii)] $\Delta u \leq 0$ (equivalently, $g$ has nonnegative scalar curvature away from $\Sigma_-$), and
 \item[(iv)] $\Sigma_+$ has zero mean curvature with respect to $g$.  Equivalently,
    $$Hu + \frac{2(n-1)}{n-2} \partial_\nu u = 0\qquad\text{ on } \Sigma_+,$$
where $H$ is the mean curvature of $\Sigma_+$ with respect to the Euclidean metric.
\end{enumerate}
In the metric $g$, each component of $\Sigma_+$ is a minimal surface, and each component of $\Sigma_-$
is a regular ZAS.  (Note $\partial_\nu (u) > 0$ on $\Sigma_-$ by the maximum principle.)

We make the \emph{ad hoc} assumption that $u \geq 1$ on $\Sigma_+$ and proceed as follows.  First, integrate 
$\Delta u \leq 0$ over $M$.  Arguments from the proofs of Theorem \ref{thm_main} and Lemma \ref{lemma_zas_lower_bound}, as well as $u \geq 1$,
give the inequalities:
\begin{align*}
m_{ADM}(g) &\geq \frac{1}{(n-1)\omega_{n-1}}\int_{\Sigma_+} Hu dA - \frac{2}{(n-2)\omega_{n-1}}\int_{\Sigma_-} \nu(u) dA\\
 &> \left(\frac{A_+}{\omega_{n-1}}\right)^{\frac{n-2}{n-1}} + m_{ZAS}(\Sigma_-) - \frac{1}{2} \left(\frac{A_-}{\omega_{n-1}}\right)^{\frac{n-2}{n-1}},
 \end{align*}
where $A_{\pm}$ is the Euclidean area of $\Sigma_{\pm}$.  We now essentially apply the same steps
as in the proof of Theorem 
\ref{thm_zas}.  Let $\varphi$ be the $\delta$-harmonic function on $\R^n \setminus \Omega_-$ that vanishes on $\Sigma_-$ and tends to one at infinity.  Restrict $\varphi$ to $M$. Since $u = 0$ on $\Sigma_-$, $u \geq 1$ on $\Sigma_+$ by assumption, and $u$ is superharmonic, we have $\partial_{\nu} \varphi \leq \partial_{\nu} u$
on $\Sigma_-$.  Running through the same argument as in Theorem \ref{thm_zas} gives:
$$m \geq \left(\frac{A_+}{\omega_{n-1}}\right)^{\frac{n-2}{n-1}} + m_{ZAS}(\Sigma_-)\left(1+\frac{1}{4}\iota_-^2\right),$$
where $\iota_-$ is the isoperimetric constant of $\Omega_-$.  This is a weakened version of
 the conjectured inequality (\ref{ineq_mixed}), assuming $u \geq 1$ on $\Sigma_+$.

An interesting problem would be to determine whether $u\geq 1$ on $\Sigma_+$ 
holds for any superharmonic function $u$ with the above boundary conditions. It is possible that the assumption $u \geq 1$ on $\Sigma_+$ is preventing $\Sigma_+$ from being very close to $\Sigma_-$; indeed if these surfaces are close with $u=0$ on $\Sigma_-$, then $u < 1$ on $\Sigma_+$ may be possible.

\section*{Appendix: The Poincar\'e--Faber--Szeg\"o capacity--volume inequality}
For reference, we include a proof of the capacity--volume inequality used in the proof of Theorem
\ref{thm_zas}.  The following is based entirely 
on the dimension three case of \cite{isoperimetric}. In this appendix, all geometric quantities are with respect to the Euclidean metric.

\begin{thm} [Poincar\'e--Faber--Szeg\"o] Let $\Omega$ be a bounded open set in $\R^n$ with smooth boundary
such that $\R^n \sm \Omega$ is connected. Then
\begin{equation}
\label{ineq_cap_vol}
\capac(\Omega) \geq \left(\frac{V}{\beta_n}\right)^{\frac{n-2}{n}},
\end{equation}
where $\capac(\Omega)$ and $V$ are the capacity (see (\ref{eqn_capac})) and volume of $\Omega$, respectively.
\end{thm}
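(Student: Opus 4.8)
The plan is to run the classical P\'olya--Szeg\"o argument directly in dimension $n$, rather than reducing dimensions: I encode the geometry of $\Omega$ into a single ordinary differential inequality for the volumes of the sublevel sets of the capacitary potential, and then integrate it.

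Let $\varphi$ be the harmonic function on $\R^n \sm \Omega$ that vanishes on $\partial \Omega$ and tends to $1$ at infinity, extended by $0$ on $\Omega$ so that $\varphi \colon \R^n \to [0,1)$ is continuous; recall from section~\ref{sec_zas} that $\capac(\Omega) = \frac{1}{(n-2)\omega_{n-1}}\int_{\partial\Omega}\partial_\nu\varphi \, dA$. For $t \in (0,1)$ put $U_t = \{\varphi < t\}$ and $v(t) = |U_t|$; then $v$ is nondecreasing, $v(t) \to V$ as $t \to 0^+$, and $v(t) \to \infty$ as $t \to 1^-$ (since $\{\varphi < t\}$ exhausts $\R^n$). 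By Sard's theorem almost every $t$ is a regular value, so $\partial U_t = \{\varphi = t\}$ is a smooth hypersurface, and the coarea formula gives $v'(t) = \int_{\{\varphi = t\}} |\nabla \varphi|^{-1} \, d\mathcal{H}^{n-1}$ for a.e.\ $t$. The key structural observation is that harmonicity of $\varphi$ forces the flux $\Phi := \int_{\{\varphi = t\}} |\nabla\varphi| \, d\mathcal{H}^{n-1} = \int_{\{\varphi = t\}} \partial_\nu\varphi \, d\mathcal{H}^{n-1}$ to be independent of $t$ (apply the divergence theorem on $\{s < \varphi < t\}$), and that $\Phi = (n-2)\omega_{n-1}\capac(\Omega)$.

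Next I combine Cauchy--Schwarz with the Euclidean isoperimetric inequality. Cauchy--Schwarz on $\{\varphi = t\}$ gives $\mathcal{H}^{n-1}(\{\varphi = t\})^2 \le \Phi \, v'(t)$, and the isoperimetric inequality gives $\mathcal{H}^{n-1}(\{\varphi = t\}) = |\partial U_t| \ge \omega_{n-1}(v(t)/\beta_n)^{(n-1)/n}$. Introducing $\rho(t) = (v(t)/\beta_n)^{1/n}$, the radius of the ball of volume $v(t)$, and using $\omega_{n-1} = n\beta_n$, these two inequalities collapse to $\rho^{\,1-n}\rho' \ge \omega_{n-1}/\Phi$ on $(0,1)$. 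Integrating over $(0,1)$: since $n \ge 3$ and $\rho \to \infty$ as $t \to 1^-$, the left side integrates to $\rho(0^+)^{\,2-n}/(n-2)$, so $\rho(0^+)^{\,2-n}/(n-2) \ge \omega_{n-1}/\Phi = 1/\big((n-2)\capac(\Omega)\big)$, which rearranges to $\capac(\Omega) \ge \rho(0^+)^{\,n-2} = (V/\beta_n)^{(n-2)/n}$. Tracking equality (each step is an equality when $\Omega$ is a ball) shows the constant is sharp.

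The substantive difficulties are technical rather than conceptual: one must justify the a.e.\ regularity of the level sets so that the coarea formula, the isoperimetric inequality, and the differentiability of $v$ all apply; handle the flux-constancy argument across the critical set of $\varphi$ (which is harmless, being closed of measure zero for a harmonic function); and control the endpoint behavior of $v$ as $t \to 0^+$ and $t \to 1^-$. An alternative that quotes rather than reproves is spherical symmetrization --- replacing $\Omega$ by the ball $B_R$ of equal volume does not increase $\int |\nabla\psi|^2$ (the P\'olya--Szeg\"o rearrangement inequality), after which the claim is the explicit identity $\capac(B_R) = R^{n-2}$ --- but this simply moves the work into the rearrangement inequality, so I would present the self-contained coarea argument above.
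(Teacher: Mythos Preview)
Your argument is correct and is essentially the same coarea/Cauchy--Schwarz/isoperimetric argument the paper gives, following P\'olya--Szeg\"o. The one genuine difference is in the endgame: the paper never invokes flux constancy; instead, after reaching the inequality $(n-2)\omega_{n-1}\capac(\Omega) \geq \int_0^1 \omega_{n-1} R(t)^{n-1}/R'(t)\,dt$, it reinterprets the right-hand side as the Dirichlet energy of the radial function $\tilde\varphi$ that equals $t$ on the sphere of radius $R(t)$, and then bounds this below by $(n-2)\omega_{n-1}\capac(\tilde\Omega)$ using the variational definition of capacity for the equal-volume ball $\tilde\Omega$. Your use of the harmonicity of $\varphi$ to fix $\Phi$ as a constant lets you integrate the ODE $\rho^{1-n}\rho' \geq \omega_{n-1}/\Phi$ directly, which is slightly cleaner (no comparison function, no second appeal to the variational definition) but buys nothing beyond economy; conversely, the paper's route never needs to argue flux constancy across possible critical levels of $\varphi$.
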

\begin{proof}
Let $0 \leq \varphi<1$ be the unique function on $\R^n \sm \Omega$ that vanishes on $\partial \Omega$, is harmonic on
$\R^n \sm \Omega$, and approaches 1 at infinity.  Then
\begin{align*}
 (n-2)\omega_{n-1} \capac(\Omega) &= \int_{\R^n \sm \Omega} |\nabla \varphi|^2 dV.
\end{align*}
For $t \in [0,1)$, let $\Sigma_t$ be the level set $\varphi\inv(t)$. Note that $\Sigma_t$ is smooth for almost every $t$, and $|\nabla \varphi|\neq 0$ on $\Sigma_t$ for such $t$.
By the co-area
formula,
\begin{equation}
\label{a1}
\int_{\R^n \sm \Omega} |\nabla \varphi|^2 dV = \int_0^1 \int_{\Sigma_t} |\nabla \varphi|^2 \frac{1}{|\nabla \varphi|} dA_t dt,
\end{equation}
where $dA_t$ is the area form on $\Sigma_t$.  By the Schwarz inequality, for almost all $t \in [0,1)$,
\begin{equation}
\label{a2}
|\Sigma_t|^2 \leq \left(\int_{\Sigma_t} |\nabla \varphi| dA_ t\right) \left(\int_{\Sigma_t} \frac{1}{ |\nabla \varphi|} dA_ t\right),
\end{equation}
where $|\Sigma_t|$ is the area of $\Sigma_t$.  Combining (\ref{a1}) and (\ref{a2}) produces:
$$\int_{\R^n \sm \Omega} |\nabla \varphi|^2 dV \geq \int_0^1 \frac{|\Sigma_t|^2}{\int_{\Sigma_t} \frac{1}{|\nabla \varphi|} dA_t} dt.$$
Let $V(t)$ be the volume in $\R^n$ of the region bounded by $\Sigma_t$, so again by the co-area formula:
$$V(t) = \vol(\Omega) + \int_0^t \int_{\Sigma_s} \frac{1}{|\nabla \varphi|} dA_s ds,$$
and therefore
$$V'(t) =\int_{\Sigma_t} \frac{1}{|\nabla \varphi|} dA_t$$
for almost all $t \in [0,1)$.
Combining the above gives
\begin{align*}
 (n-2)\omega_{n-1} \capac(\Omega) &\geq\int_0^1 \frac{|\Sigma_t|^2}{V'(t)} dt\\
  &\geq  \int_0^1 \frac{(\omega_{n-1})^2 \left(\frac{V(t)}{\beta_n}\right)^{\frac{2(n-1)}{n}}}{V'(t)} dt,
\end{align*}
where we have used the isoperimetric inequality on the second line.  
Let $R(t)$ be the radius of the sphere that has volume equal to $V(t)$, i.e., $V(t)=\beta_n R(t)^n$.
Then $V'(t) = n\beta_n R(t)^{n-1} R'(t)$ for almost all $t \in [0,1)$, so 
\begin{equation}
\label{eqn_R_prime}
 (n-2)\omega_{n-1} \capac(\Omega) \geq  
    \int_0^1  \frac{\omega_{n-1}R(t)^{n-1}}{R'(t)} dt,
\end{equation}
having used the fact $n\beta_n = \omega_{n-1}$.

Now, let $\tilde \Omega$ be the open ball
about the origin with the same volume as $\Omega$.  Let $\tilde \Sigma_t$ be the sphere about the origin 
of radius $R(t)$, with area form $d \tilde A_t$. Note $\tilde \Sigma_0 = \partial \tilde \Omega$. Let $\tilde \varphi : \R^n \setminus \tilde \Omega \to \R$ be the function that equals $t$ on $\tilde \Sigma_t$.  Note that $\tilde \varphi$ is continuous, since $R^{-1}$ is continuous (which holds because $V$, and hence $R$, is strictly increasing), $\tilde \varphi = 0$ on $\partial \tilde \Omega$, and $\tilde \varphi \to 1$ at infinity. We continue inequality 
(\ref{eqn_R_prime}), using the fact that $\omega_{n-1} R(t)^{n-1} = \int_{\tilde \Sigma_t} d\tilde A_t$ and the observation that $|\nabla \tilde \varphi| = \frac{1}{R'(t)}$ on $\tilde \Sigma_t$ for almost all $t \in [0,1)$:
\begin{align*}
 (n-2)\omega_{n-1} \capac(\Omega) &\geq  
    \int_0^1  \int_{\tilde \Sigma_t}|\nabla \tilde \varphi| d\tilde A_t dt &&(\text{by (\ref{eqn_R_prime})})\\
  &= \int_{\R^n \sm \tilde \Omega} |\nabla \tilde \varphi|^2 dV &&(\text{co-area formula}).
\end{align*}
Thus, $\tilde \varphi$ is in the Sobolev space $W^{1,2}_{\text{loc}}(\R^n \setminus \tilde \Omega)$. Note also that $\tilde \varphi$ is smooth near $\partial \tilde \Omega$ and is smooth outside a compact set. (These facts follow from $|\nabla \varphi| \neq 0$ near $\partial \Omega$, as $\partial_\nu(\varphi)>0$ on $\partial \Omega$ by the maximum principle, as well as $|\nabla \varphi| \neq 0$ near infinity, as $\varphi$ admits an expansion $\varphi(x) = 1 + \frac{c}{|x|^{n-2}} + O(|x|^{1-n})$ into spherical harmonics near infinity.) If $\tilde \varphi$ is not smooth on $\R^n \setminus \tilde \Omega$, proceed as follows. Let $U \subset \R^n \setminus \tilde \Omega$ be a smooth open set whose closure is compact and disjoint from $\partial \tilde \Omega$, where $U$ contains all points where $\tilde \varphi$ is not smooth. Using the density of smooth functions in $W^{1,2}(U)$, given any $\epsilon>0$, there exists a smooth function $\dbtilde{\varphi}$ that agrees with $\tilde \varphi$ near $\partial \tilde \Omega$ and outside a compact set, such that
$$ \int_{\R^n \sm \tilde \Omega} |\nabla \tilde \varphi|^2dV \geq  \int_{\R^n \sm \tilde \Omega} |\nabla \dbtilde{\varphi}|^2dV - \epsilon.$$
But $\dbtilde{\varphi}$ is a valid competitor in the definition of the capacity of $\tilde \Omega$, so that
$$\int_{\R^n \sm \tilde \Omega} |\nabla \dbtilde{\varphi}|^2dV \geq  (n-2)\omega_{n-1} \capac(\tilde \Omega).$$
Combining, we have
$$\capac(\Omega) \geq \capac(\tilde \Omega) - \frac{\epsilon}{(n-2)\omega_{n-1}}.$$
It is straightforward to check that equality holds in (\ref{ineq_cap_vol}) for round balls.  
Thus, since $\Omega$ has the same volume as $\tilde \Omega$, and $\epsilon$ can be made arbitrarily small, the proof is complete.
\end{proof}

\begin{bibdiv}
 \begin{biblist}

\bib{adm}{article}{
   author={Arnowitt, R.},
   author={Deser, S.},
   author={Misner, C. W.},
   title={Coordinate invariance and energy expressions in general relativity},
   journal={Phys. Rev. (2)},
   volume={122},
   date={1961},
   pages={997--1006}
}

\bib{bartnik}{article}{
   author={Bartnik, R.},
   title={The mass of an asymptotically flat manifold},
   journal={Comm. Pure Appl. Math.},
   volume={39},
   date={1986},
   number={5},
   pages={661--693}
}

\bib{bray_npms}{article}{
	title={Negative point mass singularities in general relativity},
	author={Bray, H.},
	eprint={http://www.newton.ac.uk/webseminars/pg+ws/2005/gmr/0830/bray/},
	conference={
		title={Global problems in mathematical relativity},
		address={Isaac Newton Institute, University of Cambridge},
		date={2005-08-30}
	}
}

\bib{bray_survey}{article}{
   author={Bray, H.},
   title={On the positive mass, Penrose, and ZAS inequalities in general
   dimension},
   conference={
      title={Surveys in geometric analysis and relativity},
   },
   book={
      series={Adv. Lect. Math. (ALM)},
      volume={20},
      publisher={Int. Press, Somerville, MA},
   },
   date={2011},
   pages={1--27}
}

\bib{bray_RPI}{article}{
   author={Bray, H.},
   title={Proof of the Riemannian Penrose inequality using the positive mass theorem},
   journal={J. Differential Geom.},
   volume={59},
   date={2001},
   number={2},
   pages={177--267}
}

\bib{bray_iga}{article}{
   author={Bray, H.},
   author={Iga, K.},
   title={Superharmonic functions in $\mathbf R^n$ and the Penrose
   inequality in general relativity},
   journal={Comm. Anal. Geom.},
   volume={10},
   date={2002},
   number={5},
   pages={999--1016}
}

\bib{zas}{article}{
   author={Bray, H.},
   author={Jauregui, J.},
   title={A geometric theory of zero area singularities in general
   relativity},
   journal={Asian J. Math.},
   volume={17},
   date={2013},
   number={3},
   pages={525--559}
}

\bib{bray_khuri}{article}{
   author={Bray, H.},
   author={Khuri, M.},
   title={P.D.E.'s which imply the Penrose conjecture},
   journal={Asian J. Math.},
   volume={15},
   date={2011},
   number={4},
   pages={557--610}
}

\bib{bray_lee}{article}{
   author={Bray, H.},
   author={Lee, D.A.},
   title={On the Riemannian Penrose inequality in dimensions less than eight},
   journal={Duke Math. J.},
   volume={148},
   date={2009},
   number={1},
   pages={81--106}
}

\bib{Chr}{article}{
   author={Chru\'sciel, P.},
   title={Boundary conditions at spatial infinity from a Hamiltonian point
   of view},
   conference={
      title={Topological properties and global structure of space-time},
      address={Erice},
      date={1985},
   },
   book={
      series={NATO Adv. Sci. Inst. Ser. B Phys.},
      volume={138},
      publisher={Plenum, New York},
   },
   date={1986},
   pages={49--59}
}

\bib{federer}{article}{
   author={Federer, H.},
   title={The singular sets of area minimizing rectifiable currents with
   codimension one and of area minimizing flat chains modulo two with
   arbitrary codimension},
   journal={Bull. Amer. Math. Soc.},
   volume={76},
   date={1970},
   pages={767--771}
}

\bib{fed_flem}{article}{
   author={Federer, H.},
   author={Fleming, W.},
   title={Normal and integral currents},
   journal={Ann. of Math. (2)},
   volume={72},
   date={1960},
   pages={458--520}
}

\bib{fleming}{article}{
   author={Fleming, W.},
   title={On the oriented Plateau problem},
   journal={Rend. Circ. Mat. Palermo (2)},
   volume={11},
   date={1962},
   pages={69--90}
}

\bib{FS}{article}{
   author={Freire, A.},
   author={Schwartz, F.},
   title={Mass-capacity inequalities for conformally flat manifolds with
   boundary},
   journal={Comm. Partial Differential Equations},
   volume={39},
   date={2014},
   number={1},
   pages={98--119}
}

\bib{gibbons}{article}{
   author={Gibbons, G.W.},
   title={Collapsing shells and the isoperimetric inequality for black
   holes},
   journal={Classical Quantum Gravity},
   volume={14},
   date={1997},
   number={10},
   pages={2905--2915}
}

\bib{guan_li}{article}{
   author={Guan, P.},
   author={Li, J.},
   title={The quermassintegral inequalities for $k$-convex starshaped
   domains},
   journal={Adv. Math.},
   volume={221},
   date={2009},
   number={5},
   pages={1725--1732}
}

\bib{huang_wu}{article}{
   author={Huang, L.-H.},
   author={Wu, D.},
   title={The equality case of the Penrose inequality for asymptotically
   flat graphs},
   journal={Trans. Amer. Math. Soc.},
   volume={367},
   date={2015},
   number={1},
   pages={31--47}
}

\bib{imcf}{article}{
   author={Huisken, G.},
   author={Ilmanen, T.},
   title={The inverse mean curvature flow and the Riemannian Penrose inequality},
   journal={J. Differential Geom.},
   volume={59},
   date={2001},
   number={3},
   pages={353--437}
}

\bib{jau}{article}{
   author={Jauregui, J.},
   title={Invariants of the harmonic conformal class of an asymptotically
   flat manifold},
   journal={Comm. Anal. Geom.},
   volume={20},
   date={2012},
   number={1},
   pages={163--201}
}

\bib{thesis}{thesis}{
	author={J. Jauregui},
  title={Mass Estimates, Conformal Techniques, and Singularities in General Relativity},
  type={Ph.D. thesis},
  organization={Duke University},
  date={2010}
}

\bib{lam}{article}{
	title={The graphs cases of the Riemannian positive mass and Penrose inequalities in all dimensions},
	author={M.-K. G. Lam},
	eprint={http://arxiv.org/abs/1010.4256},
	date={2010}
}

\bib{lam_thesis}{thesis}{
	author={M.-K. G. Lam},
  title={The graph cases of the Riemannian positive mass and Penrose inequalities in all dimensions},
  type={Ph.D. thesis},
  organization={Duke University},
  date={2011}
}

\bib{lima_girao}{article}{
	title={A rigidity result for the graph case of the Penrose inequality},
	author={L. de Lima},
	author={F. Gir\~ao},
	eprint={http://arxiv.org/abs/1205.1132},
	date={2012}
}

\bib{miao}{article}{
	title={Positive mass theorem on manifolds admitting corners along a hypersurface},
	author={P. Miao},
	journal={Adv. Theor. Math. Phys.},
	volume={6},
	date={2002},
	pages={1163--1182}
}

\bib{isoperimetric}{book}{
   author={P{\'o}lya, G.},
   author={Szeg{\"o}, G.},
   title={Isoperimetric Inequalities in Mathematical Physics},
   series={Annals of Mathematics Studies, no. 27},
   publisher={Princeton University Press},
   place={Princeton, N. J.},
   date={1951}
}

\bib{zas_robbins}{article}{
   author={Robbins, N.},
   title={Zero area singularities in general relativity and inverse mean curvature flow},
   journal={Classical Quantum Gravity},
   volume={27},
   date={2010},
   number={2}
}

\bib{schneider}{book}{
   author={Schneider, R.},
   title={Convex bodies: the Brunn--Minkowski theory},
   series={Encyclopedia of Mathematics and its Applications},
   volume={44},
   publisher={Cambridge University Press},
   date={1993}
}

\bib{schoen_yamabe}{article}{
   author={Schoen, R.},
   title={Conformal deformation of a Riemannian metric to constant scalar
   curvature},
   journal={J. Differential Geom.},
   volume={20},
   date={1984},
   number={2},
   pages={479--495}
}

\bib{schoen_lecture_notes}{article}{
   author={Schoen, R.},
   title={Variational theory for the total scalar curvature functional for
   Riemannian metrics and related topics},
   conference={
      title={Topics in calculus of variations},
      address={Montecatini Terme},
      date={1987},
   },
   book={
      series={Lecture Notes in Math.},
      volume={1365},
      publisher={Springer},
      place={Berlin},
   },
   date={1989},
   pages={120--154}
}

\bib{schoen_yau}{article}{
	author={Schoen, R.},
	author={Yau, S.-T.},
	title={On the proof of the positive mass conjecture in general relativity},
	journal={Comm. Math. Phys.},
	volume={65},
	year={1979},
	pages={45--76}
}

\bib{schoen_yau2}{article}{
	author={Schoen, R.},
	author={Yau, S.-T.},
	title={Complete manifolds with nonnegative scalar curvature and the positive action conjecture in general relativity},
	journal={Proc. Nat. Acad. Sci. U.S.A.},
	volume={76},
	number={3},
	year={1979},
	pages={1024--1025}
}

\bib{schoen_yau3}{article}{
	author={Schoen, R.},
	author={Yau, S.-T.},
	title={Positive Scalar Curvature and Minimal Hypersurface Singularities},
	eprint={https://arxiv.org/abs/1704.05490},
	year={2017}
}

\bib{schwartz}{article}{
	title={A volumetric Penrose inequality for conformally flat manifolds},
	author={Schwartz, F.},
	journal={Ann. Henri Poincar\'e},
	volume={12},
	date={2011},
	pages={67--76}
}

\bib{shi_tam}{article}{
   author={Shi, Y.},
   author={Tam, L.-F.},
   title={Positive mass theorem and the boundary behaviors of compact
   manifolds with nonnegative scalar curvature},
   journal={J. Differential Geom.},
   volume={62},
   date={2002},
   number={1},
   pages={79--125}
}

\bib{witten}{article}{
	author={Witten,E.},
	title={A new proof of the Positive Energy Theorem},
	journal={Comm. Math. Phys.},
	volume={80},
	year={1981},
	pages={381-402}
}
  
 \end{biblist}
\end{bibdiv}

\end{document}